\begin{document}
\title[Positive solutions to boundary-value problems]{Existence and
uniqueness of positive solutions of boundary-value problems for fractional
differential equations with $p$-Laplacian operator}
\author[E. \c{S}en, M. Acikgoz, J. J. Seo, S. Araci, K. Oru\c{c}o\u{g}lu]{%
Erdo\u{g}an \c{S}en, Mehmet Acikgoz, Jong Jin Seo, Serkan Araci, Kamil Oru%
\c{c}o\u{g}lu}
\address{Erdo\u{g}an \c{S}en \\
Department of Mathematics, Faculty of Science and Letters, Namik Kemal
University, 59030 Tekirda\u{g}, Turkey\\
Department of Mathematics Engineering, Istanbul Technical University,
Maslak, 34469 Istanbul, Turkey}
\email{erdogan.math@gmail.com}
\address{Mehmet Acikgoz \\
University of Gaziantep, Faculty of Science and Arts, Department of
Mathematics, 27310 Gaziantep}
\email{acikgoz@gantep.edu.tr}
\address{Jong Jin Seo\\
Department of Applied Mathematics, Pukyong National University, Busan
608-737, Republic of Korea}
\email{seo2011@pknu.ac.kr}
\address{Serkan Araci \\
University of Gaziantep, Faculty of Science and Arts, Department of
Mathematics, 27310 Gaziantep}
\email{mtsrkn@hotmail.com}
\address{Kamil Oru\c{c}o\u{g}lu\\
Department of Mathematics Engineering, Istanbul Technical University,
Maslak, 34469 Istanbul, Turkey}
\email{koruc@itu.edu.tr}
\subjclass[2000]{34A08, 34B18, 35J05}
\keywords{Fractional boundary-value problem; positive solution; cone;
\hfill\break\indent 
Schauder fixed point theorem; uniqueness; $p$-Laplacian operator}

\begin{abstract}
In this article, we consider the following boundary-value problem of
nonlinear fractional differential equation with $p$-Laplacian operator 
\begin{gather*}
D_{0+}^{\beta }(\phi _{p}(D_{0+}^{\alpha }u(t)))+a(t)f(u)=0,\quad 0<t<1, \\
u(0)=\gamma u(h)+\lambda ,\text{ }u^{\prime }(0)=\mu ,\quad \\
\phi _{p}(D_{0+}^{\alpha }u(0))=(\phi _{p}(D_{0+}^{\alpha }u(1)))^{\prime
}=(\phi _{p}(D_{0+}^{\alpha }u(0)))^{\prime \prime }=(\phi
_{p}(D_{0+}^{\alpha }u(0)))^{\prime \prime \prime }=0,
\end{gather*}%
where $1<\alpha \leqslant 2$, $3<\beta \leqslant 4$ are real numbers, $%
D_{0+}^{\alpha },D_{0+}^{\beta }$ are the standard Caputo fractional
derivatives, $\phi _{p}(s)=|s|^{p-2}s$, $p>1$, $\phi _{p}^{-1}=\phi _{q}$, $%
1/p+1/q=1$, $0\leqslant \gamma <1$, $0\leqslant h\leqslant 1$, $\lambda ,\mu
>0$ are parameters, $a:(0,1)\rightarrow \lbrack 0,+\infty )$ and $%
f:[0,+\infty )\rightarrow \lbrack 0,+\infty )$ are continuous. By the
properties of Green function and Schauder fixed point theorem, several
existence and nonexistence results for positive solutions, in terms of the
parameters $\lambda $ and $\mu $ are obtained. The uniqueness of positive
solution on the parameters $\lambda $ and $\mu $ is also studied. In the
final section of this paper, we derive not only new but also interesting
identities related special polynomials by which Caputo fractional derivative.
\end{abstract}

\maketitle


\allowdisplaybreaks

\section{Introduction}

In 1695, L'H\^{o}pital asked Leibniz: What if the order of the derivative is 
$\frac{1}{2}$? To which Leibniz considered in an useful means, "thus it
follows that will be equal to $x\sqrt{dx:x}$, an obvious paradox. In recent
years, fractional calculus has been studied by many mathematicians from
Leibniz's time to the present.

Also, fractional differential equations arise in many engineering and
scientific disciplines as the mathematical modelling of systems and
processes in the fields of physics, fluid flows, electrical networks,
viscoelasticity, aerodynamics, and many other branches of science. For
details, see \cite{Agarwal,balan2,Balan1,Kilbas, machado, Meral,
Oldham,Podlubny,Weitzner}.

In the last few decades, fractional-order models are found to be more
adequate than integer-order models for some real world problems. Recently,
there have been some papers dealing with the existence and multiplicity of
solutions (or positive solutions) of non linear initial fractional
differential equations by the use of techniques of nonlinear analysis \cite%
{bai, chai,Chen21,Feng17, wang12,Wang14, xu16,Xu20, yang19, Zhao15, Zhao18,
zhou13}, upper and lower solutions method \cite{Liang,Lu,Wang}, fixed point
index \cite{Dix,Xu}, coincidence theory \cite{Chen22}, Banach contraction
mapping principle \cite{Liu}, etc).

Chai \cite{chai} investigated the existence and multiplicity of positive
solutions for a class of boundary-value problem of fractional differential
equation with $p$-Laplacian operator 
\begin{gather*}
D_{0+}^{\beta }(\phi _{p}(D_{0+}^{\alpha }u(t)))+f(t,u(t),D_{0+}^{\rho
}u(t))=0,\quad 0<t<1, \\
u(0)=0,u(1)+\sigma D_{0+}^{\gamma }u(1)=0,\quad D_{0+}^{\alpha }u(0)=0,
\end{gather*}%
where $1<\alpha \leq 2,0<\gamma \leq 1$, $0\leq \alpha -\gamma -1$, $\sigma $
is a positive constant number, $D_{0+}^{\alpha },D_{0+}^{\beta
},D_{0+}^{\gamma }$ are the standard Riemann-Liouville derivatives. By means
of the fixed point theorem on cones, some existence and multiplicity results
of positive solutions are obtained.

Although the fractional differential equation boundary-value problems have
been studied by several authors, very little is known in the literature on
the existence and nonexistence of positive solutions of fractional
differential equation boundary-value problems with $p$-Laplacian operator
when a parameter $\lambda $ is involved in the boundary conditions. We also
mention that, there is very little known about the uniqueness of the
solution of fractional differential equation boundary-value problems with $p$%
-Laplacian operator on the parameter $\lambda $. Han et al [33] studied the
existence and uniqueness of positive solutions for the fractional
differential equation with $p$-Laplacian operator%
\begin{gather*}
D_{0+}^{\beta }(\phi _{p}(D_{0+}^{\alpha }u(t)))+a(t)f(u)=0,\quad 0<t<1, \\
u(0)=\gamma u(\xi )+\lambda ,\quad \phi _{p}(D_{0+}^{\alpha }u(0))=(\phi
_{p}(D_{0+}^{\alpha }u(1)))^{\prime }=(\phi _{p}(D_{0+}^{\alpha
}u(0)))^{\prime \prime }=0.
\end{gather*}%
where $0<\alpha \leqslant 1$, $2<\beta \leqslant 3$ are real numbers; $%
D_{0+}^{\alpha },D_{0+}^{\beta }$ are the standard Caputo fractional
derivatives, $\phi _{p}(s)=|s|^{p-2}s$, $p>1$. Therefore, to enrich the
theoretical knowledge of the above, in this paper, we investigate the
following $p$-Laplacian fractional differential equation boundary-value
problem 
\begin{gather}
D_{0+}^{\beta }(\phi _{p}(D_{0+}^{\alpha }u(t)))+a(t)f(u)=0,\quad 0<t<1, 
\tag{1} \\
u(0)=\gamma u(h)+\lambda ,\text{ }u^{\prime }(0)=\mu ,\quad  \notag \\
\phi _{p}(D_{0+}^{\alpha }u(0))=(\phi _{p}(D_{0+}^{\alpha }u(1)))^{\prime
}=(\phi _{p}(D_{0+}^{\alpha }u(0)))^{\prime \prime }=(\phi
_{p}(D_{0+}^{\alpha }u(0)))^{\prime \prime \prime }=0,  \tag{2}
\end{gather}%
where $1<\alpha \leqslant 2$, $3<\beta \leqslant 4$ are real numbers, $%
D_{0+}^{\alpha },D_{0+}^{\beta }$ are the standard Caputo fractional
derivatives, $\phi _{p}(s)=|s|^{p-2}s$, $p>1$, $\phi _{p}^{-1}=\phi _{q}$, $%
1/p+1/q=1$, $0\leqslant \gamma <1$, $0\leqslant h\leqslant 1$, $\lambda ,\mu
>0$ are parameters, $a:(0,1)\rightarrow \lbrack 0,+\infty )$ and $%
f:[0,+\infty )\rightarrow \lbrack 0,+\infty )$ are continuous. By the
properties of Green function and Schauder fixed point theorem, several
existence and nonexistence results for positive solutions, in terms of the
parameters $\lambda $ and $\mu $ are obtained. The uniqueness of positive
solution on the parameters $\lambda $ and $\mu $ is also studied.

\section{Preliminaries and related lemmas}

\begin{definition}[\cite{Kilbas}]\label{d2.1}\rm
The Riemann-Liouville fractional integral of order $\alpha>0$ of a
function $y:(0,+\infty)\to \mathbb{R}$ is given by
$$
I_{0+}^\alpha y(t)=\frac{1}{\Gamma(\alpha)}\int_{0}^t(t-s)^{\alpha-1}y(s)ds
$$
provided the right side is pointwise defined on $(0,+\infty)$.
\end{definition}

\begin{definition}[\cite{Kilbas}]\label{d2.2}\rm
 The Caputo fractional derivative of order
$\alpha>0$ of a continuous function $y:(0,+\infty)\to \mathbb{R}$ is
given by
$$
D_{0+}^\alpha y(t)=\frac{1}{\Gamma(n-\alpha)}
\int_0^t\frac{y^{(n)}(s)}{(t-s)^{\alpha-n+1}}ds,
$$
where $n$ is the smallest integer greater than or equal to $\alpha$,
provided that the right side is pointwise defined on $(0,+\infty)$. 
\end{definition}

\begin{remark}[\cite{Podlubny}]\label{r2.1}\rm
By Definition \ref{d2.2}, under natural
conditions on the function $f(t)$, for $\alpha\to n$ the Caputo
derivative becomes a conventional $n$-th derivative of the function
$f(t)$.
\end{remark}

\begin{remark}[\cite{Kilbas}]\label{r2.2} \rm
As a basic example, 
$$
D_{0^+}^\alpha t^\mu=\mu(\mu-1)\dots
(\mu-n+1)\frac{\Gamma(1+\mu-n)}{\Gamma(1+\mu-\alpha)}t^{\mu-\alpha},\quad
\text {for } t\in(0,\infty).
$$ 
In particular $D_{0^+}^\alpha t^\mu=0$,
$\mu=0,1,\dots,n-1$, where $D^\alpha_{0^+}$ is the Caputo
fractional derivative, $n$ is the smallest integer greater than or
equal to $\alpha$.
\end{remark}

From the definition of the Caputo derivative and Remark \ref{r2.2}, we can
obtain the following statement.

\begin{lemma}[\cite{Kilbas}]\label{l2.1}\rm
 Let $\alpha>0$. Then the fractional differential equation
$$
D_{0+}^\alpha u(t)=0
$$
has a unique solution
$$
u(t)=c_0+c_1t+c_2t^2+\dots+c_{n-1}t^{n-1}, \quad c_i\in \mathbb{R},\,
i=0,1,2,\dots,n-1, 
$$
where $n$ is the smallest integer greater than or equal to $\alpha$.
\end{lemma}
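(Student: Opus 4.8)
The plan is to prove Lemma~\ref{l2.1}, which characterizes the general solution of the homogeneous Caputo fractional equation $D_{0+}^{\alpha}u(t)=0$.

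The plan is to exploit the factorization of the Caputo derivative through the Riemann--Liouville integral, reduce the equation to an ordinary one, and then verify the converse directly from the preceding remark. The whole argument splits into a ``forward'' direction (every solution has the stated polynomial form) and a ``backward'' direction (every such polynomial is a solution), together establishing that the solution space is exactly the polynomials of degree at most $n-1$.

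First I would record the identity $D_{0+}^{\alpha}u(t)=I_{0+}^{n-\alpha}\bigl(u^{(n)}\bigr)(t)$, which follows immediately by comparing Definition~\ref{d2.2} with Definition~\ref{d2.1}: the integral defining the Caputo derivative is precisely the Riemann--Liouville integral of order $n-\alpha$ applied to the classical $n$-th derivative $u^{(n)}$. With this in hand, the equation $D_{0+}^{\alpha}u=0$ is equivalent to $I_{0+}^{n-\alpha}u^{(n)}=0$.

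The key step is to deduce $u^{(n)}\equiv 0$ from $I_{0+}^{n-\alpha}u^{(n)}=0$. When $\alpha=n$ is an integer, $n-\alpha=0$ and $I_{0+}^{0}$ is the identity, so this is immediate. When $\alpha$ is not an integer, so that $n-\alpha\in(0,1)$, I would invoke the injectivity of the Riemann--Liouville integral operator on continuous functions, equivalently the composition rule which recovers $u^{(n)}$ by applying a fractional operator of order $n-\alpha$ to both sides. This injectivity is the single place where a genuine property of fractional operators, rather than elementary calculus, is required, and it is the main obstacle in the argument; everything else is bookkeeping.

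Once $u^{(n)}\equiv 0$ is established, integrating $n$ times by ordinary calculus yields $u(t)=c_{0}+c_{1}t+\dots+c_{n-1}t^{n-1}$ with $c_{i}\in\mathbb{R}$, giving the forward direction. For the converse I would verify that each such polynomial solves the equation: by Remark~\ref{r2.2} we have $D_{0+}^{\alpha}t^{\mu}=0$ for every $\mu=0,1,\dots,n-1$, and linearity of $D_{0+}^{\alpha}$ then gives $D_{0+}^{\alpha}u=0$ for the whole polynomial. Combining the two directions shows that the solution set coincides with the space of polynomials of degree at most $n-1$, as claimed.
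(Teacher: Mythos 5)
Your proposal is correct and follows exactly the route the paper itself indicates: the paper gives no proof of Lemma~\ref{l2.1} (it is quoted from \cite{Kilbas}), but prefaces it with the remark that it follows ``from the definition of the Caputo derivative and Remark~\ref{r2.2},'' which is precisely your argument --- the factorization $D_{0+}^{\alpha}u=I_{0+}^{n-\alpha}u^{(n)}$ plus injectivity of the Riemann--Liouville integral for the forward direction, and Remark~\ref{r2.2} with linearity for the converse. Your identification of the injectivity of $I_{0+}^{n-\alpha}$ (via the left-inverse/composition rule) as the one genuinely fractional ingredient is accurate, and the rest is indeed bookkeeping.
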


\begin{lemma}[\cite{Kilbas}]\label{l2.2}\rm
Let $\alpha>0$. Assume that $u\in C^n[0,1]$. Then
$$
I_{0+}^\alpha D_{0+}^\alpha u(t)=u(t)+c_0+c_1t+c_2t^2+\dots+c_{n-1}t^{n-1},
$$
for some $c_i\in \mathbb{R}$, $i=0,1,2,\dots,n-1$, where $n$ is the
smallest integer greater than or equal to $\alpha$.
\end{lemma}

\begin{lemma}
Let $y\in C\left[ 0,1\right] $ and$1<\alpha \leq 2.$ Then fractional
differential equation boundary-value problem 
\begin{equation}
D_{0+}^{\alpha }u\left( t\right) =y(t),\text{ }0<t<1  \tag{3}
\end{equation}%
\begin{equation}
u(0)=\gamma u(h)+\lambda ,\text{ }u^{\prime }(0)=\mu   \tag{4}
\end{equation}%
has a unique solution 
\begin{equation*}
u(t)=\int_{0}^{t}\frac{(t-s)^{\alpha -1}}{\Gamma (\alpha )}y(s)ds+\frac{%
\gamma }{1-\gamma }\int_{0}^{h}\frac{(h-s)^{\alpha -1}}{\Gamma (\alpha )}%
y(s)ds+\frac{\lambda +\gamma \mu h}{1-\gamma }.
\end{equation*}
\end{lemma}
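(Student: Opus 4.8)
The plan is to reduce the fractional equation to its general integral form and then pin down the two integration constants using the two boundary conditions. Since $1<\alpha\le 2$, the smallest integer exceeding $\alpha$ is $n=2$, so applying the operator $I_{0+}^{\alpha}$ to both sides of (3) and invoking Lemma \ref{l2.2} gives
\[
u(t)=I_{0+}^{\alpha}y(t)-c_{0}-c_{1}t=\int_{0}^{t}\frac{(t-s)^{\alpha-1}}{\Gamma(\alpha)}y(s)\,ds-c_{0}-c_{1}t
\]
for some real constants $c_{0},c_{1}$. Every solution of (3) has this form, and conversely each such $u$ solves (3) by Remark \ref{r2.2}, so it remains only to determine $c_{0}$ and $c_{1}$ from (4); uniqueness of the solution will be immediate once I show these two constants are forced.

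First I would handle the condition $u^{\prime}(0)=\mu$. The key computation is that differentiating the Riemann--Liouville integral lowers its order by one, $\frac{d}{dt}I_{0+}^{\alpha}y(t)=I_{0+}^{\alpha-1}y(t)$, and that this quantity vanishes at $t=0$ because $\alpha-1\in(0,1]$ keeps the kernel integrable against a collapsing interval, giving $|I_{0+}^{\alpha-1}y(t)|\le \|y\|_{\infty}t^{\alpha-1}/\Gamma(\alpha)\to 0$. Hence $u^{\prime}(0)=-c_{1}$, which fixes $c_{1}$. The only care needed here is justifying the differentiation of the integral and the evaluation of the singular kernel at the origin; this is the one genuinely analytic point, and it is where the hypothesis $\alpha>1$ is used.

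Next I would impose the nonlocal condition $u(0)=\gamma u(h)+\lambda$. Evaluating the integral form at $t=0$ gives $u(0)=-c_{0}$, while $u(h)$ contributes both the integral $\int_{0}^{h}(h-s)^{\alpha-1}y(s)\,ds/\Gamma(\alpha)$ and the constant and linear terms. Substituting yields a single linear equation in $c_{0}$; because $0\le\gamma<1$ the coefficient $1-\gamma$ is nonzero, so I can solve for $c_{0}$ uniquely, and this is exactly where the factor $1/(1-\gamma)$ and the combination $\lambda+\gamma\mu h$ in the stated formula arise. Substituting the resulting $c_{0}$ and $c_{1}$ back into the integral form produces the claimed representation, and since both constants are uniquely determined the solution is unique.

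I expect the main obstacle to be purely the fractional-calculus bookkeeping of the second paragraph --- correctly differentiating $I_{0+}^{\alpha}y$ and showing $I_{0+}^{\alpha-1}y(0)=0$ --- rather than the algebra of solving for the constants, which is routine. One secondary point I would double-check is that the linear coefficient forced by $u^{\prime}(0)=\mu$ appears to contribute a $\mu t$ summand, which must be reconciled with the stated closed form.
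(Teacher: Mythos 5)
Your argument is the same as the paper's: reduce (3) via Lemma \ref{l2.2} to $u(t)=I_{0+}^{\alpha}y(t)+c_{0}+c_{1}t$ and then determine the two constants from (4); your treatment of $u^{\prime}(0)$ (differentiating the integral to $I_{0+}^{\alpha-1}y$ and bounding it by $\|y\|_{\infty}t^{\alpha-1}/\Gamma(\alpha)$, which is exactly where $\alpha>1$ enters) is the careful version of the paper's terse claim ``$u^{\prime}(t)=c_{1}$''.

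The point you flagged at the end is not a loose end in your proof but an error in the paper: the constant $c_{1}=\mu$ forced by $u^{\prime}(0)=\mu$ really does contribute a summand $\mu t$, and it cannot be reconciled with the displayed closed form, because the function stated in the lemma satisfies $u^{\prime}(0)=I_{0+}^{\alpha-1}y(0)=0\neq\mu$. The correct unique solution is
\[
u(t)=\int_{0}^{t}\frac{(t-s)^{\alpha -1}}{\Gamma (\alpha )}y(s)\,ds
+\mu t
+\frac{\gamma }{1-\gamma }\int_{0}^{h}\frac{(h-s)^{\alpha -1}}{\Gamma (\alpha )}y(s)\,ds
+\frac{\lambda +\gamma \mu h}{1-\gamma }.
\]
The paper's own proof computes $c_{1}=\mu$ and then silently drops the $c_{1}t$ term in its final display (it also writes the exponent $\alpha-2$ where $\alpha-1$ should stand), and the same omission propagates to Lemma 4 and to the operator \eqref{e3.1}. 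So your derivation is correct, follows the paper's route, and in fact corrects it.
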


\begin{proof}
We apply Lemma \ref{l2.2} to reduce (3) to an equivalent integral equation, 
\begin{equation*}
u(t)=I_{0+}^{\alpha }y(t)+c_{0}+c_{1}t,\quad c_{0},c_{1}\in \mathbb{R}.
\end{equation*}%
Consequently, the general solution of (3) is 
\begin{equation*}
u(t)=\int_{0}^{t}\frac{(t-s)^{\alpha -2}}{\Gamma (\alpha )}%
y(s)ds+c_{0}+c_{1}t,\quad c_{0},c_{1}\in \mathbb{R}.
\end{equation*}%
By (4), we has 
\begin{equation*}
c_{0}=\frac{\gamma }{1-\gamma }\int_{0}^{h}\frac{(h-s)^{\alpha -2}}{\Gamma
(\alpha )}y(s)ds+\frac{\gamma c_{1}h}{1-\gamma }+\frac{\lambda }{1-\gamma },
\end{equation*}%
and since $u^{\prime }(t)=c_{1}$, we have by (4)%
\begin{equation*}
c_{1}=\mu
\end{equation*}%
Therefore, the unique solution of problem (3) and (4) is 
\begin{equation*}
u(t)=\int_{0}^{t}\frac{(t-s)^{\alpha -2}}{\Gamma (\alpha )}y(s)ds+\frac{%
\gamma }{1-\gamma }\int_{0}^{h}\frac{(h-s)^{\alpha -2}}{\Gamma (\alpha )}%
y(s)ds+\frac{\lambda }{1-\gamma }.+\frac{\gamma \mu h}{1-\gamma }.
\end{equation*}
\end{proof}

\begin{lemma}
Let $y\in C\left[ 0,1\right] $ and$1<\alpha \leq 2,$ $3<\beta \leq 4.$ Then
fractional differential equation boundary-value problem 
\begin{equation}
D_{0+}^{\beta }(\phi _{p}(D_{0+}^{\alpha }u(t)))+y(t)=0,\quad 0<t<1,  \tag{5}
\end{equation}%
\begin{equation}
\left\{ 
\begin{array}{c}
u(0)=\gamma u(h)+\lambda ,\text{ }u^{\prime }(0)=\mu , \\ 
\phi _{p}(D_{0+}^{\alpha }u(0))=(\phi _{p}(D_{0+}^{\alpha }u(1)))^{\prime
}=(\phi _{p}(D_{0+}^{\alpha }u(0)))^{\prime \prime }=(\phi
_{p}(D_{0+}^{\alpha }u(0)))^{\prime \prime \prime }=0,%
\end{array}%
\right.   \tag{6}
\end{equation}%
has a unique solution 
\begin{eqnarray*}
u(t) &=&\int_{0}^{t}\frac{(t-s)^{\alpha -1}}{\Gamma (\alpha )}\phi
_{q}\left( \int_{0}^{1}H(s,\tau )y(\tau )d\tau \right) ds \\
&&+\frac{\gamma }{1-\gamma }\int_{0}^{h}\frac{(h-s)^{\alpha -1}}{\Gamma
(\alpha )}\phi _{q}\left( \int_{0}^{1}H(s,\tau )y(\tau )d\tau \right) ds+%
\frac{\lambda +\gamma \mu h}{1-\gamma },
\end{eqnarray*}%
where%
\begin{equation*}
H\left( t,s\right) =\left\{ 
\begin{array}{cc}
\frac{t\left( \beta -1\right) \left( 1-s\right) ^{\beta -2}-\left(
t-s\right) ^{\beta -1}}{\Gamma (\beta )}, & 0\leq s\leq t\leq 1, \\ 
\frac{t\left( \beta -1\right) \left( 1-s\right) ^{\beta -2}}{\Gamma (\beta )}%
, & 0\leq t\leq s\leq 1.%
\end{array}%
\right. 
\end{equation*}
\end{lemma}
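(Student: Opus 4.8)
The plan is to decouple the fourth-order-in-$\beta$ part from the second-order-in-$\alpha$ part by the substitution $v(t)=\phi_p(D_{0+}^{\alpha}u(t))$. Under this substitution the boundary data in (6) split cleanly: the four conditions $\phi_p(D_{0+}^{\alpha}u(0))=(\phi_p(D_{0+}^{\alpha}u(1)))'=(\phi_p(D_{0+}^{\alpha}u(0)))''=(\phi_p(D_{0+}^{\alpha}u(0)))'''=0$ become $v(0)=v''(0)=v'''(0)=0$ and $v'(1)=0$, while the remaining conditions $u(0)=\gamma u(h)+\lambda$, $u'(0)=\mu$ are held back for the later $\alpha$-problem. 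So I would first solve the linear problem $D_{0+}^{\beta}v(t)=-y(t)$ subject to these four conditions, and then recover $u$ from $D_{0+}^{\alpha}u(t)=\phi_q(v(t))$ by invoking the lemma that solves problem (3)--(4).

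For the $v$-problem, since $3<\beta\leq 4$ the smallest integer $\geq\beta$ is $n=4$, so Lemma \ref{l2.2} gives
$$v(t)=-I_{0+}^{\beta}y(t)+c_0+c_1 t+c_2 t^2+c_3 t^3,\qquad c_i\in\mathbb{R}.$$
The computational fact I would rely on is that differentiating the Riemann--Liouville integral lowers its order, $(I_{0+}^{\beta}y)^{(k)}(t)=I_{0+}^{\beta-k}y(t)$ for $k=1,2,3$ (legitimate because $\beta-3>0$), together with $I_{0+}^{\gamma}y(0)=0$ for every $\gamma>0$. Evaluating $v$, $v''$, $v'''$ at $t=0$ then forces $c_0=c_2=c_3=0$, while $v'(1)=0$ yields $c_1=I_{0+}^{\beta-1}y(1)=\frac{1}{\Gamma(\beta-1)}\int_0^1(1-s)^{\beta-2}y(s)\,ds$. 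Absorbing the factor via $\Gamma(\beta)=(\beta-1)\Gamma(\beta-1)$, one obtains
$$v(t)=\frac{t(\beta-1)}{\Gamma(\beta)}\int_0^1(1-s)^{\beta-2}y(s)\,ds-\frac{1}{\Gamma(\beta)}\int_0^t(t-s)^{\beta-1}y(s)\,ds.$$
Splitting the range of the first integral at $s=t$ and combining it termwise with the second then produces $v(t)=\int_0^1 H(t,s)y(s)\,ds$ with exactly the two-branch kernel $H$ displayed in the statement.

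It then remains to pass from $v$ back to $u$. Applying $\phi_q=\phi_p^{-1}$ to $v(t)=\phi_p(D_{0+}^{\alpha}u(t))$ gives $D_{0+}^{\alpha}u(t)=\phi_q\bigl(\int_0^1 H(t,\tau)y(\tau)\,d\tau\bigr)$, which is a problem of precisely the form (3)--(4) with right-hand side $\tilde y(t)=\phi_q\bigl(\int_0^1 H(t,\tau)y(\tau)\,d\tau\bigr)$. Since $H$ is continuous and $\phi_q$ is continuous, $\tilde y\in C[0,1]$, so the lemma for (3)--(4) applies verbatim and delivers the asserted formula for $u$, with the $(t-s)^{\alpha-1}$ kernels and the boundary term $\tfrac{\lambda+\gamma\mu h}{1-\gamma}$.

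The constant-fitting and the integral recombination are routine; the step that needs genuine care is the differentiation of the fractional integral and the verification that $(I_{0+}^{\beta}y)''(0)=(I_{0+}^{\beta}y)'''(0)=0$. This is exactly where $\beta>3$ is used: it keeps the orders $\beta-2$ and $\beta-3$ positive, so both the derivative-lowering identity and the vanishing-at-zero property are available. Uniqueness comes for free from this same computation, since each of the four conditions pins down one of the constants $c_i$.
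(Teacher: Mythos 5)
Your proposal is correct and takes essentially the same route as the paper's own proof: both apply Lemma~\ref{l2.2} to write $\phi_p(D_{0+}^{\alpha}u(t))=-I_{0+}^{\beta}y(t)+c_0+c_1t+c_2t^2+c_3t^3$, use the four conditions in (6) to force $c_0=c_2=c_3=0$ and $c_1=\int_0^1\frac{(\beta-1)(1-\tau)^{\beta-2}}{\Gamma(\beta)}y(\tau)\,d\tau$, recognize the resulting expression as $\int_0^1H(t,\tau)y(\tau)\,d\tau$, and then invert $\phi_p$ and invoke the earlier lemma for problem (3)--(4). Your write-up simply makes explicit (via the substitution $v=\phi_p(D_{0+}^{\alpha}u)$ and the identity $(I_{0+}^{\beta}y)^{(k)}=I_{0+}^{\beta-k}y$) the constant-fitting steps that the paper states without justification.
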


\begin{proof}
From Lemma \ref{l2.2}, the boundary-value problem (5) and (6) is equivalent
to the integral equation 
\begin{equation*}
\phi _{p}(D_{0+}^{\alpha }u(t))=-I_{0+}^{\beta
}y(t)+c_{0}+c_{1}t+c_{2}t^{2}+c_{3}t^{3},
\end{equation*}%
for some $c_{0},c_{1},c_{2},c_{3}\in \mathbb{R}$; that is, 
\begin{equation*}
\phi _{p}(D_{0+}^{\alpha }u(t))=-\int_{0}^{t}\frac{(t-\tau )^{\beta -1}}{%
\Gamma (\beta )}y(\tau )d\tau +c_{0}+c_{1}t+c_{2}t^{2}+c_{3}t^{3}.
\end{equation*}%
By the boundary conditions $\phi _{p}(D_{0+}^{\alpha }u(0))=(\phi
_{p}(D_{0+}^{\alpha }u(1)))^{\prime }=(\phi _{p}(D_{0+}^{\alpha
}u(0)))^{\prime \prime }=(\phi _{p}(D_{0+}^{\alpha }u(0)))^{\prime \prime
\prime }=0$, we have 
\begin{equation*}
c_{0}=c_{2}=c_{3}=0,\quad c_{1}=\int_{0}^{1}\frac{(\beta -1)(1-\tau )^{\beta
-2}}{\Gamma (\beta )}y(\tau )d\tau .
\end{equation*}%
Therefore, the solution $u(t)$ of fractional differential equation
boundary-value problem (5) and (6) satisfies 
\begin{align*}
\phi _{p}(D_{0+}^{\alpha }u(t))& =-\int_{0}^{t}\frac{(t-\tau )^{\beta -1}}{%
\Gamma (\beta )}y(\tau )d\tau +t\int_{0}^{1}\frac{(\beta -1)(1-\tau )^{\beta
-2}}{\Gamma (\beta )}y(\tau )d\tau \\
& =\int_{0}^{1}H(t,\tau )y(\tau )d\tau .
\end{align*}%
Consequently, $D_{0+}^{\alpha }u(t)=\phi _{q}\Big(\int_{0}^{1}H(t,\tau
)y(\tau )d\tau \Big)$. Thus, fractional differential equation boundary-value
problem (5) and (6) is equivalent to the problem 
\begin{gather*}
D_{0+}^{\alpha }u(t)=\phi _{q}\Big(\int_{0}^{1}H(t,\tau )y(\tau )d\tau \Big)%
,\quad 0<t<1, \\
u(0)=\gamma u(h)+\lambda ,\text{ }u^{\prime }(0)=\mu .
\end{gather*}%
Lemma 3 implies that fractional differential equation boundary-value problem
(5) and (6) has a unique solution, 
\begin{align*}
u(t)& =\int_{0}^{t}\frac{(t-s)^{\alpha -2}}{\Gamma (\alpha )}\phi _{q}\Big(%
\int_{0}^{1}H(s,\tau )y(\tau )d\tau \Big)ds \\
& \quad +\frac{\gamma }{1-\gamma }\int_{0}^{h}\frac{(h-s)^{\alpha -2}}{%
\Gamma (\alpha )}\phi _{q}\Big(\int_{0}^{1}H(s,\tau )y(\tau )d\tau \Big)ds+%
\frac{\lambda +\gamma \mu h}{1-\gamma }.
\end{align*}%
The proof is complete.
\end{proof}

\begin{lemma}[\cite{Wang14}]\label{l2.5}
Let $1<\alpha\leqslant2,3<\beta\leqslant4$. The function $H(t,s)$ 
is
continuous on $[0,1]\times[0,1]$ and satisfies
\begin{itemize}
\item[(1)] $H(t,s)\geqslant0, H(t,s)\leqslant H(1,s)$, \; for $t,s\in[0,1]$;

\item[(2)] $H(t,s)\geqslant t^{\beta-1}H(1,s)$, \; for $t,s\in(0,1)$.
\end{itemize}
\end{lemma}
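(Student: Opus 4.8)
The plan is to treat the three assertions in turn: continuity first, then the sign and monotonicity claims in (1), and finally the sharper lower bound (2), where the real work lies.

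First I would settle continuity. The only place it is in question is the diagonal $s=t$, where the two branches of $H$ meet. Setting $t=s$ in the first branch annihilates the term $(t-s)^{\beta-1}$ and leaves $t(\beta-1)(1-s)^{\beta-2}/\Gamma(\beta)$, which is exactly the value of the second branch there; since $\beta>3$ makes every power continuous, each branch is continuous on its own closed triangle, and the matching on the diagonal gives continuity on $[0,1]\times[0,1]$.

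For (1), the region $0\le t\le s\le1$ is immediate, since every factor of $t(\beta-1)(1-s)^{\beta-2}/\Gamma(\beta)$ is nonnegative. On $0\le s\le t\le1$ I would regard the numerator $g(t)=t(\beta-1)(1-s)^{\beta-2}-(t-s)^{\beta-1}$ as a function of $t$ and differentiate: $g'(t)=(\beta-1)\big[(1-s)^{\beta-2}-(t-s)^{\beta-2}\big]\ge0$, because $\beta-2>0$ and $0\le t-s\le 1-s$. Hence $g$ is nondecreasing on $[s,1]$ and $g(s)=s(\beta-1)(1-s)^{\beta-2}\ge0$, so $H\ge0$. The same derivative estimate, together with the obvious monotonicity of the lower branch, shows $\partial_t H(t,s)\ge0$ throughout, so $H(\cdot,s)$ is nondecreasing and therefore $H(t,s)\le H(1,s)$.

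The substance is in (2). I would first record $H(1,s)=(1-s)^{\beta-2}(\beta-2+s)/\Gamma(\beta)$. In the region $0<t\le s<1$ the claim reduces, after cancelling the positive factor $(1-s)^{\beta-2}/\Gamma(\beta)$, to $t(\beta-1)\ge t^{\beta-1}(\beta-2+s)$, which holds since $t^{\beta-1}\le t$ for $t\in(0,1)$ when $\beta-1>1$, and $\beta-2+s<\beta-1$ when $s<1$. The hard case is $0<s\le t<1$. Here I would set $F(t)=\Gamma(\beta)\big[H(t,s)-t^{\beta-1}H(1,s)\big]$ on $[s,1]$ and prove $F\ge0$ by a concavity argument rather than brute force. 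A direct computation gives $F''(t)=-(\beta-1)(\beta-2)\big[(t-s)^{\beta-3}+t^{\beta-3}(1-s)^{\beta-2}(\beta-2+s)\big]\le0$, so $F$ is concave on $[s,1]$ and hence attains its minimum at an endpoint. Since $F(1)=0$ by the explicit form of $H(1,s)$, and $F(s)=(1-s)^{\beta-2}\big[s(\beta-1)-s^{\beta-1}(\beta-2+s)\big]\ge0$ (the same elementary inequality as in the easy case, read off at $t=s$), both endpoint values are nonnegative, whence $F\ge0$ on all of $[s,1]$. This gives $H(t,s)\ge t^{\beta-1}H(1,s)$ and finishes the proof. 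The main obstacle I anticipate is precisely this last case: a naive termwise comparison of $H(t,s)$ with $t^{\beta-1}H(1,s)$ fails, and the clean route is to recast it as the concavity of $F$ together with the two boundary evaluations.
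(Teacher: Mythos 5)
Your proof is correct, but there is nothing in the paper to compare it against: the paper states this lemma as an imported result, cited from \cite{Wang14}, and gives no proof of it. So your argument should be judged on its own, and it holds up at every step. Continuity is indeed only at issue on the diagonal, where both branches equal $t(\beta-1)(1-s)^{\beta-2}/\Gamma(\beta)$. For (1), your computation $g'(t)=(\beta-1)\bigl[(1-s)^{\beta-2}-(t-s)^{\beta-2}\bigr]\geqslant 0$ together with $g(s)=s(\beta-1)(1-s)^{\beta-2}\geqslant 0$ gives nonnegativity on $\{s\leqslant t\}$, and piecewise monotonicity of $H(\cdot,s)$, glued by continuity at $t=s$, gives $H(t,s)\leqslant H(1,s)$. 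For (2), the reduction $H(1,s)=(1-s)^{\beta-2}(\beta-2+s)/\Gamma(\beta)$ is right; the case $t\leqslant s$ rests only on $t^{\beta-1}\leqslant t$ and $\beta-2+s\leqslant \beta-1$; and in the case $s\leqslant t$ your
\begin{equation*}
F''(t)=-(\beta-1)(\beta-2)\Bigl[(t-s)^{\beta-3}+t^{\beta-3}(1-s)^{\beta-2}(\beta-2+s)\Bigr]\leqslant 0
\end{equation*}
is exactly where the hypothesis $\beta>3$ enters (it makes $(t-s)^{\beta-3}$ well defined, nonnegative, and vanishing at $t=s$, so $F''$ is continuous up to the left endpoint), and concavity together with $F(1)=0$ and $F(s)=(1-s)^{\beta-2}\bigl[s(\beta-1)-s^{\beta-1}(\beta-2+s)\bigr]\geqslant 0$ forces $F\geqslant 0$ on $[s,1]$. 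The concavity-plus-endpoints device is a notably clean route for this kind of Green-function estimate: the standard treatments prove such lower bounds by direct termwise manipulation of the two branches, whereas your version needs only one elementary inequality, $t^{\beta-1}(\beta-2+s)\leqslant t(\beta-1)$, used twice (once as the easy case, once as the endpoint evaluation $F(s)\geqslant 0$). What it buys the paper is a self-contained verification of a lemma that is otherwise taken on faith from the literature.
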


\begin{lemma}[Schauder fixed point theorem \cite{diethelm}] \label{l2.6}
Let $(E,d)$ be a complete metric space, $U$ be
a closed convex subset of $E$, and $A : U\to U$ be a mapping such that the 
set $\{Au : u\in U\}$ is relatively compact in $E$. Then $A$ has at 
least one fixed point.
\end{lemma}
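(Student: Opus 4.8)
The plan is to reduce this infinite-dimensional fixed-point problem to a finite-dimensional one, apply Brouwer's fixed point theorem there, and then pass to a limit. (Note that the hypothesis ``$U$ convex'' presupposes a linear structure on $E$, so I read $E$ as a normed linear space---hence a Banach space, being complete---and I take the map $A$ to be continuous, as is implicitly needed; I write $\|\cdot\|$ for the norm.) The engine of the reduction is the relative compactness of $K:=\overline{\{Au:u\in U\}}$.

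First I would use that $K$ is compact, hence totally bounded: for each $\varepsilon>0$ there is a finite $\varepsilon$-net $\{y_1,\dots,y_n\}\subseteq A(U)$ for $K$ (the net points can be taken in the dense subset $A(U)$). From this net I build the Schauder projection by setting the continuous weights $\mu_i(y)=\max\{0,\varepsilon-\|y-y_i\|\}$ and
$$
P_\varepsilon(y)=\frac{\sum_{i=1}^{n}\mu_i(y)\,y_i}{\sum_{i=1}^{n}\mu_i(y)}.
$$
Since the $y_i$ form an $\varepsilon$-net, the denominator never vanishes on $K$, so $P_\varepsilon$ is a continuous map of $K$ into the convex hull $C_\varepsilon:=\mathrm{conv}\{y_1,\dots,y_n\}$, and because only the $y_i$ with $\|y-y_i\|<\varepsilon$ contribute to the weighted average one verifies the key estimate $\|P_\varepsilon(y)-y\|<\varepsilon$ for every $y\in K$.

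Next I would invoke Brouwer's theorem. Having chosen the net points inside $A(U)$, their convex hull $C_\varepsilon$ lies in $U$ (by convexity of $U$ together with $A(U)\subseteq U$), so $C_\varepsilon$ is a compact convex subset of the finite-dimensional space $\mathrm{span}\{y_1,\dots,y_n\}$. The composition $A_\varepsilon:=P_\varepsilon\circ A$ maps $C_\varepsilon$ continuously into $C_\varepsilon$, so Brouwer's fixed point theorem produces a point $u_\varepsilon\in C_\varepsilon$ with $A_\varepsilon u_\varepsilon=u_\varepsilon$; the approximation estimate then gives $\|Au_\varepsilon-u_\varepsilon\|=\|Au_\varepsilon-P_\varepsilon(Au_\varepsilon)\|<\varepsilon$. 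Taking $\varepsilon=1/k$ yields a sequence $(u_k)\subseteq U$ with $\|Au_k-u_k\|\to 0$. As $\{Au_k\}\subseteq K$ and $K$ is compact, a subsequence $Au_{k_j}\to u^*\in K$; then $u_{k_j}\to u^*$ as well, $u^*\in U$ because $U$ is closed, and continuity of $A$ forces $Au^*=\lim_j Au_{k_j}=u^*$, the desired fixed point.

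The main obstacle is the construction and correct handling of the Schauder projection: proving the uniform bound $\|P_\varepsilon(y)-y\|<\varepsilon$, and---more delicately---arranging that the convex hull $C_\varepsilon$ sits inside $U$ so that $A_\varepsilon$ genuinely self-maps a finite-dimensional compact convex set and Brouwer's theorem applies. Once that finite-dimensional reduction is in place, the concluding limit argument is a routine exercise in compactness and continuity.
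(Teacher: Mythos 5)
Your argument is correct, and it is the classical proof of Schauder's theorem: reduce to finite dimensions with a Schauder projection built from an $\varepsilon$-net, apply Brouwer's theorem on the convex hull of the net points, and pass to the limit using compactness and continuity. Be aware, though, that the paper itself gives no proof of this lemma: it is stated as a known result imported from the reference \cite{diethelm}, so there is nothing internal to compare your argument with --- what you have written is, in effect, the textbook proof that the citation points to. Two remarks on details. First, your decision to read $E$ as a Banach space and to add continuity of $A$ is not cosmetic but necessary: as literally stated (a complete metric space with a ``convex'' subset and no continuity hypothesis) the lemma is false --- on $U=[0,1]\subset\mathbb{R}$ the map $A(0)=1$, $A(x)=x/2$ for $0<x\leqslant 1$ has relatively compact image and no fixed point --- so the statement only makes sense under your reading, which is the standard one. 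Second, two points you leave implicit are harmless but worth recording: $U$ must be nonempty for the $\varepsilon$-net and Brouwer's theorem to produce anything, and the inclusion $\overline{A(U)}\subseteq U$, coming from closedness of $U$, is what puts the limit point $u^{*}$ in $U$. Otherwise every step --- the positivity of the denominator of $P_\varepsilon$ on $K$, the estimate $\|P_\varepsilon(y)-y\|<\varepsilon$, the fact that $A_\varepsilon$ is a continuous self-map of the compact convex finite-dimensional set $C_\varepsilon$, and the concluding subsequence argument --- is sound.
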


To prove our main results, we use the following assumptions.

\begin{itemize}
\item[(H1)] $0<\int_0^1H(1,\tau)a(\tau)d\tau<+\infty$;

\item[(H2)] there exist $0<\sigma <1$ and $c>0$ such that 
\begin{equation}
f(x)\leqslant \sigma L\phi _{p}(x),\quad \text{for }0\leqslant x\leqslant c,
\tag{7}  \label{e2.7}
\end{equation}%
where $L$ satisfies 
\begin{equation}
0<L\leqslant \Big[\phi _{p}\Big(\frac{1+\gamma (h^{\alpha }-1)}{\Gamma
(\alpha +1)(1-\gamma )}\Big)\int_{0}^{1}H(1,\tau )a(\tau )d\tau \Big]^{-1}; 
\tag{8}  \label{e2.8}
\end{equation}

\item[(H3)] there exist $d>0$ such that 
\begin{equation}
f(x)\leqslant M\phi _{p}(x),\quad \text{for }d<x<+\infty ,  \tag{9}
\label{e2.9}
\end{equation}%
where $M$ satisfies 
\begin{equation}
0<M<\Big[\phi _{p}\Big(\frac{1+\gamma (h^{\alpha }-1)}{\Gamma (\alpha
+1)(1-\gamma )}2^{q-1}\Big)\int_{0}^{1}H(1,\tau )a(\tau )d\tau \Big]^{-1}; 
\tag{10}  \label{e2.10}
\end{equation}

\item[(H4)] there exist $0<\delta <1$ and $e>0$ such that 
\begin{equation}
f(x)\geqslant N\phi _{p}(x),\quad \text{for }e<x<+\infty ,  \tag{11}
\label{e2.11}
\end{equation}%
where $N$ satisfies 
\begin{equation}
N>\Big[\phi _{p}\Big(c_{\delta }\int_{0}^{1}\frac{(1-s)^{\alpha -2}}{\Gamma
(\alpha )}\phi _{q}(s^{\beta -1})ds\Big)\int_{\delta }^{1}H(1,\tau )a(\tau
)d\tau \Big]^{-1};  \tag{12}  \label{e2.12}
\end{equation}%
with 
\begin{equation}
c_{\delta }=\int_{0}^{\delta }\alpha (1-s)^{\alpha -2}\phi _{q}(s^{\beta
-1})ds\in (0,1);  \tag{13}  \label{e2.13}
\end{equation}

\item[(H5)] $f(x)$ is nondecreasing in $x$;

\item[(H6)] there exist $0\leqslant \theta <1$ such that 
\begin{equation}
f(kx)\geqslant (\phi _{p}(k))^{\theta }f(x),\quad \text{for any $0<k<1$ and $%
0<x<+\infty $}.  \tag{14}  \label{e2.14}
\end{equation}
\end{itemize}

\begin{remark}\label{r2.1b} \rm
Let
$$
f_0=\lim_{x\to0^+}\frac{f(x)}{\phi_p(x)}, \quad 
f_\infty=\lim_{x\to+\infty}\frac{f(x)}{\phi_p(x)}.
$$
Then, (H2) holds if $f_0=0$, (H3) holds if $f_\infty=0$,
 and (H4) holds if $f_\infty=+\infty$.
\end{remark}

\section{Existence}

\begin{theorem}
Assume that (H1), (H2) hold. Then the fractional differential equation
boundary-value problem (1.1) and (1.2) has at least one positive solution
for $0<\lambda +\gamma \mu \leq \left( 1-\gamma \right) \left( 1-\phi
_{q}\left( \sigma \right) \right) c.$
\end{theorem}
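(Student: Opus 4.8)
The plan is to reformulate (1)--(2) as a fixed-point equation and invoke the Schauder fixed point theorem (Lemma~\ref{l2.6}). I work in $E=C[0,1]$ with $\|u\|=\max_{t\in[0,1]}|u(t)|$ and in the cone $P=\{u\in E:u(t)\geqslant0\text{ for }t\in[0,1]\}$. Taking $y(\tau)=a(\tau)f(u(\tau))$ in the solution representation established above for problem (5)--(6), define $T:P\to E$ by
\begin{align*}
Tu(t)&=\int_0^t\frac{(t-s)^{\alpha-1}}{\Gamma(\alpha)}\phi_q\Big(\int_0^1H(s,\tau)a(\tau)f(u(\tau))\,d\tau\Big)ds\\
&\quad+\frac{\gamma}{1-\gamma}\int_0^h\frac{(h-s)^{\alpha-1}}{\Gamma(\alpha)}\phi_q\Big(\int_0^1H(s,\tau)a(\tau)f(u(\tau))\,d\tau\Big)ds+\frac{\lambda+\gamma\mu h}{1-\gamma}.
\end{align*}
A fixed point of $T$ is exactly a solution of (1)--(2). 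Since $H\geqslant0$ (Lemma~\ref{l2.5}), $a\geqslant0$, $f\geqslant0$, $\phi_q$ is increasing with $\phi_q(0)=0$, and $\lambda,\mu,\gamma,h\geqslant0$, each $Tu(t)\geqslant0$, so $T(P)\subseteq P$.

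The decisive step is to exhibit a closed convex set that $T$ maps into itself; I take $U=\{u\in P:\|u\|\leqslant c\}$. For $u\in U$ one has $0\leqslant u(\tau)\leqslant c$, so (H2) and monotonicity of $\phi_p$ give $f(u(\tau))\leqslant\sigma L\phi_p(c)$. Writing $A=\int_0^1H(1,\tau)a(\tau)\,d\tau$ and $B=\frac{1+\gamma(h^\alpha-1)}{\Gamma(\alpha+1)(1-\gamma)}$, the bound $H(s,\tau)\leqslant H(1,\tau)$ yields $\int_0^1H(s,\tau)a(\tau)f(u(\tau))\,d\tau\leqslant\sigma L\phi_p(c)A$. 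Applying $\phi_q$, using its multiplicativity on $[0,\infty)$ together with $\phi_q(\phi_p(c))=c$, and evaluating $\int_0^t\frac{(t-s)^{\alpha-1}}{\Gamma(\alpha)}ds=\frac{t^\alpha}{\Gamma(\alpha+1)}\leqslant\frac{1}{\Gamma(\alpha+1)}$ (and likewise over $[0,h]$), the two integral terms of $Tu$ combine precisely into the factor $B$, so that
$$\|Tu\|\leqslant B\,\phi_q(\sigma)\phi_q(L)\,c\,\phi_q(A)+\frac{\lambda+\gamma\mu h}{1-\gamma}.$$
Now (8) is exactly $L\phi_p(B)A\leqslant1$; applying $\phi_q$ gives $B\,\phi_q(L)\phi_q(A)\leqslant1$, whence $\|Tu\|\leqslant\phi_q(\sigma)c+\frac{\lambda+\gamma\mu h}{1-\gamma}$.

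The parameter hypothesis closes the estimate: since $0\leqslant h\leqslant1$ we have $\lambda+\gamma\mu h\leqslant\lambda+\gamma\mu\leqslant(1-\gamma)(1-\phi_q(\sigma))c$, so $\frac{\lambda+\gamma\mu h}{1-\gamma}\leqslant(1-\phi_q(\sigma))c$ and therefore $\|Tu\|\leqslant c$; thus $T:U\to U$. To apply Lemma~\ref{l2.6} it remains to check that $T(U)$ is relatively compact. Continuity of $T$ follows from that of $f$, $a$, $H$ and $\phi_q$; $T(U)$ is uniformly bounded by $c$; and equicontinuity follows from uniform continuity of the kernels on $[0,1]$, so the Arzel\`{a}--Ascoli theorem applies. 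Schauder's theorem then produces a fixed point $u^\ast\in U$, and since $u^\ast(t)=Tu^\ast(t)\geqslant\frac{\lambda+\gamma\mu h}{1-\gamma}>0$ (using $\lambda,\mu>0$), $u^\ast$ is a positive solution.

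The step I expect to be the main obstacle is the self-map estimate, specifically the bookkeeping of the $p$-Laplacian: one must use $\phi_q(ab)=\phi_q(a)\phi_q(b)$ and $\phi_q\circ\phi_p=\mathrm{id}$ on $[0,\infty)$ so that the constant $L$ and the geometric factor $B$ recombine exactly as calibrated in (8), leaving the clean coefficient $\phi_q(\sigma)<1$. (I also note that the kernel exponent in $T$ should read $\alpha-1$, consistent with $I_{0+}^\alpha$, rather than the $\alpha-2$ printed in the proof of the preceding lemma.)
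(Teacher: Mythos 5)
Your proof is correct and follows essentially the same route as the paper's own argument: the same operator $T_\lambda$, the same closed convex set ($\{u\in C[0,1]:0\leqslant u(t)\leqslant c\}$), the same use of (H2) together with the calibration (8) and $\phi_q$-multiplicativity to obtain the bound $\phi_q(\sigma)c+(1-\phi_q(\sigma))c=c$, and the same appeal to Schauder's theorem. Your observation that the kernel exponent should be $\alpha-1$ rather than the printed $\alpha-2$ is also right (it is a typo propagated from the proof of Lemma 3), and your explicit Arzel\`{a}--Ascoli verification merely fills in what the paper dismisses as ``a standard argument.''
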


\begin{proof}
Let $c>0$ be given in (H2). Define 
\begin{equation*}
K_{1}=\{u\in C[0,1]:0\leqslant u(t)\leqslant c\text{ on }[0,1]\}
\end{equation*}%
and an operator $T_{\lambda }:K_{1}\rightarrow C[0,1]$ by%
\begin{equation*}
T_{\lambda }u\left( t\right) =\int_{0}^{t}\frac{\left( t-s\right) ^{\alpha
-2}}{\Gamma \left( \alpha \right) }\phi _{q}\left( \int_{0}^{1}H(s,\tau
)a(\tau )f\left( u\left( \tau \right) \right) d\tau \right)
\end{equation*}%
\begin{equation}
+\frac{\gamma }{1-\gamma }\int_{0}^{h}\frac{(h-s)^{\alpha -2}}{\Gamma
(\alpha )}\phi _{q}\left( \int_{0}^{1}H(s,\tau )a(\tau )f\left( u\left( \tau
\right) \right) d\tau \right) ds+\frac{\lambda +\gamma \mu h}{1-\gamma } 
\tag{15}  \label{e3.1}
\end{equation}%
Then, $K_{1}$ is a closed convex set. From Lemma 4, $u$ is a solution of
fractional differential equation boundary-value problem (1) and (2) if and
only if $u$ is a fixed point of $T_{\lambda }$. Moreover, a standard
argument can be used to show that $T_{\lambda }$ is compact.

For any $u\in K_{1}$, from (3) and (4), we obtain 
\begin{equation*}
f(u(t))\leqslant \sigma L\phi _{p}(u(t))\leqslant \sigma L\phi _{p}(c),\quad 
\text{on }[0,1],
\end{equation*}%
and 
\begin{equation*}
\frac{1+\gamma (h^{\alpha }-1)}{\Gamma (\alpha +1)(1-\gamma )}\phi
_{q}(L)\phi _{q}\Big(\int_{0}^{1}H(1,\tau )a(\tau )d\tau \Big)\leqslant 1.
\end{equation*}%
Let $0<\lambda +\gamma \mu \leqslant (1-\gamma )(1-\phi _{q}(\sigma ))c$.
Then, from Lemma \ref{l2.5} and \eqref{e3.1}, it follows that 
\begin{align*}
0\leqslant T_{\lambda }u(t)& \leqslant \int_{0}^{t}\frac{(t-s)^{\alpha -2}}{%
\Gamma (\alpha )}\phi _{q}\Big(\int_{0}^{1}H(1,\tau )a(\tau )f(u(\tau
))d\tau \Big)ds \\
& \quad +\frac{\gamma }{1-\gamma }\int_{0}^{h}\frac{(h-s)^{\alpha -2}}{%
\Gamma (\alpha )}\phi _{q}\Big(\int_{0}^{1}H(1,\tau )a(\tau )f(u(\tau
))d\tau \Big)ds+\frac{\lambda +\gamma \mu h}{1-\gamma } \\
& \leqslant \frac{1}{\Gamma (\alpha +1)}\phi _{q}\Big(\int_{0}^{1}H(1,\tau
)a(\tau )f(u(\tau ))d\tau \Big) \\
& \quad +\frac{\gamma h^{\alpha }}{\Gamma (\alpha +1)(1-\gamma )}\phi _{q}%
\Big(\int_{0}^{1}H(1,\tau )a(\tau )f(u(\tau ))d\tau \Big)+(1-\phi
_{q}(\sigma ))c \\
& =\frac{1+\gamma h^{\alpha }-\gamma )}{\Gamma (\alpha +1)(1-\gamma )}\phi
_{q}\Big(\int_{0}^{1}H(1,\tau )a(\tau )f(u(\tau ))d\tau \Big)+(1-\phi
_{q}(\sigma ))c \\
& \leqslant \frac{1+\gamma (h^{\alpha }-1)}{\Gamma (\alpha +1)(1-\gamma )}%
\phi _{q}(L)\phi _{q}\Big(\int_{0}^{1}H(1,\tau )a(\tau )d\tau \Big)\phi
_{q}(\sigma )c+(1-\phi _{q}(\sigma ))c \\
& \leqslant \phi _{q}(\sigma )c+(1-\phi _{q}(\sigma ))c=c,\quad t\in \lbrack
0,1].
\end{align*}%
Thus, $T_{\lambda }(K_{1})\subseteq K_{1}$, By Schauder fixed point theorem, 
$T_{\lambda }$ has a fixed point $u\in K_{1}$; that is, the fractional
differential equation boundary-value problem (1) and (2) has at least one
positive solution. The proof is complete.
\end{proof}

\begin{corollary}
Assume that (H1) holds and $f_{0}=0$. Then the fractional differential
equation boundary-value problem (1) and (2) has at least one positive
solution for sufficiently small $\lambda >0.$ 
\end{corollary}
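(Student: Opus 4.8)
The plan is to obtain the corollary as an immediate consequence of Theorem~1, using the hypothesis $f_0=0$ only to verify condition (H2); this is exactly the implication recorded in Remark~\ref{r2.1b}. Hypothesis (H1) is inherited directly and serves merely to make the constant $L$ appearing in (H2) finite and strictly positive.

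First I would fix the free parameters of (H2). Choose any $\sigma\in(0,1)$, for definiteness $\sigma=\tfrac{1}{2}$, and take $L$ to be the upper bound allowed by (8),
\begin{equation*}
L=\Big[\phi_p\Big(\frac{1+\gamma(h^{\alpha}-1)}{\Gamma(\alpha+1)(1-\gamma)}\Big)\int_{0}^{1}H(1,\tau)a(\tau)d\tau\Big]^{-1},
\end{equation*}
which is a well-defined positive number by (H1). Then $\sigma L$ is a fixed positive constant depending only on the data $\alpha,\beta,\gamma,h,a$.

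Next I would unfold the definition of $f_0$. Since $f_0=\lim_{x\to0^+}f(x)/\phi_p(x)=0$, for the tolerance $\varepsilon=\sigma L>0$ there exists $c>0$ with $f(x)\le \sigma L\,\phi_p(x)$ for all $0\le x\le c$; note that necessarily $f(0)=0$, as otherwise the quotient would blow up and contradict $f_0=0$. This is precisely inequality (7), so (H2) holds with the chosen $\sigma$, $L$, and $c$. This is the single place where the assumption $f_0=0$ enters, and it is the crux of the argument.

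With (H1) and (H2) now in force, Theorem~1 produces a positive solution of (1) and (2) whenever $0<\lambda+\gamma\mu\le(1-\gamma)(1-\phi_q(\sigma))c$. Because $\sigma,\gamma,c$ are now fixed, the right-hand side is a fixed positive number, so the inequality holds for all sufficiently small $\lambda>0$, which gives the claim. The only real obstacle is bookkeeping: one must confirm that the constants $\sigma,L,c$ producing (H2) are chosen \emph{independently} of $\lambda$ (so that the admissible set of $\lambda$ is a genuine interval $(0,\lambda_0]$ rather than collapsing) and that $\gamma\mu$ does not already exhaust the bound, which is automatic once $\mu$ is kept small alongside $\lambda$.
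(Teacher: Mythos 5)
Your proof is correct and follows exactly the route the paper intends: the corollary is stated without proof as an immediate consequence of Theorem 1 together with Remark 2 (that $f_{0}=0$ implies (H2)), which is precisely the argument you spell out, with the added (correct) details that $L$ may be taken equal to the bound in (8) and that continuity forces $f(0)=0$. Your closing caveat about the term $\gamma\mu$ is a genuine point of precision the paper glosses over—Theorem 1's condition $0<\lambda+\gamma\mu\leqslant(1-\gamma)(1-\phi_{q}(\sigma))c$ constrains the combination $\lambda+\gamma\mu$, so the corollary's conclusion "for sufficiently small $\lambda>0$" tacitly assumes $\gamma\mu$ does not already exceed the fixed bound, exactly as you note.
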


\begin{theorem}
Assume that (H1), (H3) hold. Then the fractional differential equation
boundary-value problem (1) and (2) has at least one positive solution for
all $\lambda >0.$ 
\end{theorem}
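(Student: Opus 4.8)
The plan is to reuse the Schauder framework of the theorem proved under (H1) and (H2), except that (H3) only controls $f$ for large arguments, so instead of a fixed ball of radius $c$ I would work in a ball $K_R=\{u\in C[0,1]:0\leqslant u(t)\leqslant R\}$ whose radius $R$ is chosen large, and allowed to depend on $\lambda$ and $\mu$. With the same operator $T_\lambda$ from \eqref{e3.1}, the set $K_R$ is closed and convex, fixed points of $T_\lambda$ coincide with solutions of (1)--(2) by Lemma 4, and $T_\lambda$ is compact by the standard argument; so the whole problem reduces to finding one $R>0$ with $T_\lambda(K_R)\subseteq K_R$.

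First I would turn (H3) into a bound valid for all $x\geqslant0$. Since $f$ is continuous, set $\bar f=\max_{0\leqslant x\leqslant d}f(x)$; combining this with (9) gives $f(x)\leqslant M\phi_p(x)+\bar f$ for every $x\geqslant0$. For $u\in K_R$ I would then run exactly the chain of estimates from the earlier existence theorem—using $H(s,\tau)\leqslant H(1,\tau)$ from Lemma \ref{l2.5} and the same elementary integral bounds—to arrive at
\[
T_\lambda u(t)\leqslant \frac{1+\gamma(h^\alpha-1)}{\Gamma(\alpha+1)(1-\gamma)}\,\phi_q\Big(\int_0^1 H(1,\tau)a(\tau)f(u(\tau))\,d\tau\Big)+\frac{\lambda+\gamma\mu h}{1-\gamma}.
\]

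Now the key computation: since $u(\tau)\leqslant R$ and $\phi_p$ is increasing, $f(u(\tau))\leqslant M\phi_p(R)+\bar f$, so the integral is at most $(M\phi_p(R)+\bar f)\int_0^1H(1,\tau)a(\tau)\,d\tau$. Using that $\phi_q$ is multiplicative, that $\phi_q(\phi_p(R))=R$, and the elementary split $\phi_q(x+y)\leqslant\phi_q(2\max(x,y))=2^{q-1}\phi_q(\max(x,y))\leqslant2^{q-1}(\phi_q(x)+\phi_q(y))$—which is precisely where the factor $2^{q-1}$ in (H3) comes in—I would obtain
\[
T_\lambda u(t)\leqslant \kappa R+C+\frac{\lambda+\gamma\mu h}{1-\gamma},\qquad \kappa=\frac{1+\gamma(h^\alpha-1)}{\Gamma(\alpha+1)(1-\gamma)}\,2^{q-1}\,\phi_q(M)\,\phi_q\Big(\int_0^1 H(1,\tau)a(\tau)\,d\tau\Big),
\]
where $C$ gathers the $\bar f$ contribution and does not depend on $R$ or $\lambda$. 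Applying the increasing, multiplicative map $\phi_q$ to the defining inequality (10) for $M$ shows exactly that $\kappa<1$.

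Finally, because $\kappa<1$ I can pick any $R\geqslant(1-\kappa)^{-1}\big(C+\tfrac{\lambda+\gamma\mu h}{1-\gamma}\big)$, which makes $T_\lambda u(t)\leqslant R$ on $[0,1]$, i.e. $T_\lambda(K_R)\subseteq K_R$; Lemma \ref{l2.6} then yields a fixed point $u\in K_R$. Every term of $T_\lambda u$ is nonnegative and the free constant $\tfrac{\lambda+\gamma\mu h}{1-\gamma}$ is strictly positive, so $u(t)\geqslant\tfrac{\lambda+\gamma\mu h}{1-\gamma}>0$ and $u$ is a positive solution; since $R$ may be taken arbitrarily large, this works for every $\lambda>0$ (and every $\mu>0$). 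The one delicate point I expect is the $\phi_q$/$\phi_p$ bookkeeping in the third step, making sure the coefficient of $R$ is exactly the $\kappa$ that (10) forces below $1$; compactness and positivity are routine.
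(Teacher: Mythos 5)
Your proof is correct and follows essentially the same route as the paper: Schauder's theorem on a large ball $\{0\leqslant u\leqslant R\}$, the Hardy--Littlewood--P\'olya splitting $\phi_q(x+y)\leqslant 2^{q-1}(\phi_q(x)+\phi_q(y))$, and condition (10) supplying exactly the coefficient $\kappa<1$ that lets the radius absorb the $\lambda$-term. The only cosmetic difference is that you package (H3) as the global bound $f(x)\leqslant M\phi_p(x)+\bar f$ and solve for $R$ explicitly, whereas the paper splits $[0,1]$ into the sets $I_1^u$ (where $u\leqslant d$, so $f(u)\leqslant D$) and $I_2^u$ (where $u>d$, so $f(u)\leqslant M\phi_p(d^\ast)$) and chooses the radius $d^\ast$ implicitly via its inequality (17); the resulting estimates are identical.
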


\begin{proof}
Let $\lambda >0$ be fixed and $d>0$ be given in (H3). Define $%
D=\max_{0\leqslant x\leqslant d}f(x)$. Then 
\begin{equation}
f(x)\leqslant D,\quad \text{for }0\leqslant x\leqslant d.  \tag{16}
\label{e3.2}
\end{equation}%
From \eqref{e2.10}, we have 
\begin{equation*}
\frac{1+\gamma (h^{\alpha }-1)}{\Gamma (\alpha +1)(1-\gamma )}2^{q-1}\phi
_{q}(M)\phi _{q}\Big(\int_{0}^{1}H(1,\tau )a(\tau )d\tau \Big)<1.
\end{equation*}%
Thus, there exists $d^{\ast }>d$ large enough so that 
\begin{equation}
\frac{1+\gamma (h^{\alpha }-1)}{\Gamma (\alpha +1)(1-\gamma )}2^{q-1}(\phi
_{q}(D)+\phi _{q}(M)d^{\ast })\phi _{q}\Big(\int_{0}^{1}H(1,\tau )a(\tau
)d\tau \Big)+\frac{\lambda +\gamma \mu h}{1-\gamma }\leqslant d^{\ast }. 
\tag{17}  \label{e3.3}
\end{equation}%
Let 
\begin{equation*}
K_{2}=\{u\in C[0,1]:0\leqslant u(t)\leqslant d^{\ast }\text{ on }[0,1]\}.
\end{equation*}%
For $u\in K_{2}$, define 
\begin{gather*}
I_{1}^{u}=\{t\in \lbrack 0,1]:0\leqslant u(t)\leqslant d\}, \\
I_{2}^{u}=\{t\in \lbrack 0,1]:d<u(t)\leqslant d^{\ast }\}.
\end{gather*}%
Then, $I_{1}^{u}\cup I_{2}^{u}=[0,1],I_{1}^{u}\cap I_{2}^{u}=\emptyset $,
and in view of \eqref{e2.9}, we have 
\begin{equation}
f(u(t))\leqslant M\phi _{p}(u(t))\leqslant M\phi _{p}(d^{\ast }),\quad \text{%
for }t\in I_{2}^{u}.  \tag{18}  \label{e3.4}
\end{equation}%
Let the compact operator $T_{\lambda }$ be defined by \eqref{e3.1}. Then
from Lemma \ref{l2.5}, \eqref{e2.9} and \eqref{e3.2}, we have 
\begin{align*}
0& \leqslant T_{\lambda }u(t) \\
& \leqslant \int_{0}^{t}\frac{(t-s)^{\alpha -2}}{\Gamma (\alpha )}\phi _{q}%
\Big(\int_{0}^{1}H(1,\tau )a(\tau )f(u(\tau ))d\tau \Big)ds \\
& \quad +\frac{\gamma }{1-\gamma }\int_{0}^{h}\frac{(h-s)^{\alpha -2}}{%
\Gamma (\alpha )}\phi _{q}\Big(\int_{0}^{1}H(1,\tau )a(\tau )f(u(\tau
))d\tau \Big)ds+\frac{\lambda +\gamma \mu h}{1-\gamma } \\
& \leqslant \frac{1}{\Gamma (\alpha +1)}\phi _{q}\Big(\int_{0}^{1}H(1,\tau
)a(\tau )f(u(\tau ))d\tau \Big) \\
& \quad +\frac{\gamma h^{\alpha }}{\Gamma (\alpha +1)(1-\gamma )}\phi _{q}%
\Big(\int_{0}^{1}H(1,\tau )a(\tau )f(u(\tau ))d\tau \Big)+\frac{\lambda
+\gamma \mu h}{1-\gamma } \\
& =\frac{1+\gamma (h^{\alpha }-1)}{\Gamma (\alpha +1)(1-\gamma )}\phi _{q}%
\Big(\int_{I_{1}^{u}}H(1,\tau )a(\tau )f(u(\tau ))d\tau
+\int_{I_{2}^{u}}H(1,\tau )a(\tau )f(u(\tau ))d\tau \Big) \\
& \quad +\frac{\lambda +\gamma \mu h}{1-\gamma } \\
& \leqslant \frac{1+\gamma (h^{\alpha }-1)}{\Gamma (\alpha +1)(1-\gamma )}%
\phi _{q}\Big(D\int_{I_{1}^{u}}H(1,\tau )a(\tau )d\tau +M\phi _{p}(d^{\ast
})\int_{I_{2}^{u}}H(1,\tau )a(\tau )d\tau \Big) \\
& \quad +\frac{\lambda +\gamma \mu h}{1-\gamma } \\
& \leqslant \frac{1+\gamma (h^{\alpha }-1)}{\Gamma (\alpha +1)(1-\gamma )}%
\phi _{q}(D+M\phi _{p}(d^{\ast }))\phi _{q}\Big(\int_{0}^{1}H(1,\tau )a(\tau
)d\tau \Big)+\frac{\lambda +\gamma \mu h}{1-\gamma }.
\end{align*}%
From \eqref{e3.3} and the inequality $(a+b)^{r}\leqslant 2^{r}(a^{r}+b^{r})$
for any $a,b,r>0$ (see, for example, \cite{hardy}), we obtain 
\begin{align*}
0& \leqslant T_{\lambda }u(t) \\
& \leqslant \frac{1+\gamma (h^{\alpha }-1)}{\Gamma (\alpha +1)(1-\gamma )}%
2^{q-1}(\phi _{q}(D)+\phi _{q}(M)d^{\ast })\phi _{q}\Big(\int_{0}^{1}H(1,%
\tau )a(\tau )d\tau \Big)+\frac{\lambda +\gamma \mu h}{1-\gamma }\leqslant
d^{\ast }.
\end{align*}%
Thus, $T_{\lambda }:K_{2}\rightarrow K_{2}$. Consequently, by Schauder fixed
point theorem, $T_{\lambda }$ has a fixed point $u\in K_{2}$, that is, the
fractional differential equation boundary-value problem (1) and (2) has at
least one positive solution. The proof is complete.
\end{proof}

\begin{corollary}
Assume that (H1) holds and $f_{\infty }=0$. Then the fractional differential
equation boundary-value problem (1) and (2) has at least one positive
solution for all $\lambda >0.$ 
\end{corollary}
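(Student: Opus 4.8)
The plan is to derive this corollary directly from Theorem 2 by showing that the analytic hypothesis $f_\infty=0$ forces the quantitative condition (H3). Since Theorem 2 already guarantees a positive solution for all $\lambda>0$ whenever (H1) and (H3) both hold, and (H1) is assumed here, the entire task reduces to exhibiting a constant $M$ and a threshold $d$ of the kind required in (H3). This is exactly the implication asserted in Remark \ref{r2.1b}, so the corollary amounts to spelling out that remark.

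First I would fix a value of $M$ lying strictly between $0$ and the upper bound displayed in \eqref{e2.10}, namely $\big[\phi_p\big(\tfrac{1+\gamma(h^\alpha-1)}{\Gamma(\alpha+1)(1-\gamma)}2^{q-1}\big)\int_0^1 H(1,\tau)a(\tau)\,d\tau\big]^{-1}$. This interval is nonempty: assumption (H1) gives $0<\int_0^1 H(1,\tau)a(\tau)\,d\tau<+\infty$, so the bracketed expression is a finite positive number and its reciprocal is a genuine positive upper bound for $M$. Thus a legitimate choice of $M$ satisfying \eqref{e2.10} exists.

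Next I would invoke the definition of $f_\infty$. Because $f_\infty=\lim_{x\to+\infty} f(x)/\phi_p(x)=0$ and the chosen $M$ is positive, the definition of the limit supplies a number $d>0$ such that $f(x)/\phi_p(x)<M$, and hence $f(x)\leqslant M\phi_p(x)$, for every $x>d$ (here $\phi_p(x)=x^{p-1}>0$ for $x>0$, so the division is legitimate). This is precisely condition (H3) for the pair $(d,M)$. With (H1) assumed and (H3) now verified, I would apply Theorem 2 verbatim to conclude that the boundary-value problem (1)--(2) admits at least one positive solution for all $\lambda>0$.

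There is no real obstacle in this argument: the only points to check are that the admissible range for $M$ is nonempty, which is immediate from (H1), and that the vanishing of $f_\infty$ yields the threshold $d$, which is immediate from the definition of the limit. All of the substantive fixed-point work has already been completed in the proof of Theorem 2, so the corollary follows at once.
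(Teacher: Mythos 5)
Your proposal is correct and follows exactly the route the paper intends: the corollary is stated as an immediate consequence of Theorem 2 together with Remark \ref{r2.1b}, which records precisely the implication you verify (that $f_\infty=0$ forces (H3), the nonemptiness of the admissible range for $M$ being guaranteed by (H1)). Nothing is missing; your write-up simply makes explicit the limit argument the paper leaves implicit in that remark.
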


\section{Uniqueness}

\begin{definition}[\cite{guo27}] \label{d4.1}\rm
 A cone $P$ in a real Banach space $X$ is called solid if its interior
 $P^o$ is not empty.
\end{definition}

\begin{definition}[\cite{guo27}] \label{d4.2} \rm
Let $P$ be a solid cone in a real Banach space $X, T : P^o\to P^o$ be
an operator, and $0\leqslant\theta< 1$. Then T is called a
 $\theta$-concave operator if
$$
T(ku)\geqslant k^\theta Tu\quad \text{for any $0<k<1$ and $u\in P^o$}.
$$
\end{definition}

\begin{lemma}[{\cite[Theorem 2.2.6]{guo27}}] \label{l4.1}
  Assume that $P$ is a normal solid cone in a real Banach space $X$,
$0\leqslant\theta< 1$, and $T : P^o\to P^o$ is a $\theta$-concave 
increasing operator. Then $T$ has only one fixed point in $P^o$.
\end{lemma}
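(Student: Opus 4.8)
The plan is to establish existence and uniqueness separately, both resting on the order structure of the normal solid cone $P$ together with the two defining properties of $T$: it is increasing and $\theta$-concave.

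Uniqueness is the quick half. Suppose $u^{*},v^{*}\in P^{o}$ are two fixed points. Since each interior point of a solid cone is an order unit, the two are comparable, so I would set $t_{0}=\sup\{t>0:u^{*}\geqslant tv^{*}\}$; this is a finite positive number (finiteness uses $u^{*}\leqslant\mu v^{*}$ together with pointedness of $P$) and the supremum is attained because $P$ is closed. If $t_{0}<1$, then applying $T$ to $u^{*}\geqslant t_{0}v^{*}$ and invoking first monotonicity and then $\theta$-concavity gives $u^{*}=Tu^{*}\geqslant T(t_{0}v^{*})\geqslant t_{0}^{\theta}Tv^{*}=t_{0}^{\theta}v^{*}$. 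As $0\leqslant\theta<1$ and $0<t_{0}<1$ force $t_{0}^{\theta}>t_{0}$, this contradicts maximality of $t_{0}$. Hence $t_{0}\geqslant1$, i.e. $u^{*}\geqslant v^{*}$; exchanging the roles of $u^{*},v^{*}$ yields $v^{*}\geqslant u^{*}$, so $u^{*}=v^{*}$.

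For existence I would fix any $u_{0}\in P^{o}$; since $Tu_{0}\in P^{o}$, comparability of order units provides $c_{1},c_{2}>0$ with $c_{1}u_{0}\leqslant Tu_{0}\leqslant c_{2}u_{0}$. Choosing $t\in(0,1)$ small, explicitly $t\leqslant\min\{c_{1}^{1/(1-\theta)},c_{2}^{-1/(1-\theta)}\}$, and setting $v_{0}=tu_{0}$, $w_{0}=t^{-1}u_{0}$, the $\theta$-concavity inequality converts these into a sub/supersolution pair: $v_{0}\leqslant Tv_{0}$ and $Tw_{0}\leqslant w_{0}$, with $v_{0}=t^{2}w_{0}\leqslant w_{0}$. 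Iterating $v_{n}=Tv_{n-1}$, $w_{n}=Tw_{n-1}$ and using monotonicity produces the nested chain $v_{0}\leqslant v_{1}\leqslant\cdots\leqslant w_{1}\leqslant w_{0}$, so $\{v_{n}\}$ increases and $\{w_{n}\}$ decreases. The heart of the matter is a quantitative control on the gap: by induction, with base $v_{0}=t^{2}w_{0}$ and an inductive step applying monotonicity followed by $\theta$-concavity, I would prove $v_{n}\geqslant t^{2\theta^{n}}w_{n}$ for all $n$. Then $0\leqslant w_{n}-v_{n}\leqslant(1-t^{2\theta^{n}})w_{n}\leqslant(1-t^{2\theta^{n}})w_{0}$, and since $0\leqslant\theta<1$ gives $\theta^{n}\to0$ and hence $t^{2\theta^{n}}\to1$, normality of $P$ turns this order bound into $\|w_{n}-v_{n}\|\to0$ and shows both $\{v_{n}\}$ and $\{w_{n}\}$ are Cauchy. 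Their common limit $u^{*}$ satisfies $v_{n}\leqslant u^{*}\leqslant w_{n}$; passing $v_{n+1}=Tv_{n}\leqslant Tu^{*}\leqslant Tw_{n}=w_{n+1}$ to the limit yields $Tu^{*}=u^{*}$, while $u^{*}\geqslant v_{1}\in P^{o}$ forces $u^{*}\in P^{o}$ because $P^{o}+P\subseteq P^{o}$. With the uniqueness above, $u^{*}$ is the unique fixed point in $P^{o}$.

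The step I expect to be the main obstacle is the existence half, and within it the engineering of the geometric estimate $v_{n}\geqslant t^{2\theta^{n}}w_{n}$ that squeezes the two monotone iterations together. This is the one place where $\theta$-concavity enters quantitatively rather than merely as a comparison tool, and where normality of the cone is indispensable: without it the order convergence of $w_{n}-v_{n}$ to the zero element would not upgrade to a norm-Cauchy condition, and the limit argument would break down.
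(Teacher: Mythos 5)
Your proof is correct, but there is nothing in the paper to compare it against: the paper does not prove this lemma at all---it is imported verbatim as Theorem 2.2.6 of Guo and Lakshmikantham \cite{guo27} and used as a black box in the proof of Theorem \ref{t4.1}. What you have written is essentially a faithful reconstruction of the standard proof from that reference: uniqueness via the extremal scalar $t_{0}=\sup\{t>0:u^{*}\geqslant tv^{*}\}$ (well defined because interior points of a solid cone are order units, attained because $P$ is closed) and the contradiction $t_{0}^{\theta}>t_{0}$ when $t_{0}<1$; existence via scaling an interior point into a subsolution $v_{0}=tu_{0}$ and a supersolution $w_{0}=t^{-1}u_{0}$, monotone iteration, the gap estimate $v_{n}\geqslant t^{2\theta^{n}}w_{n}$, and normality to convert the order bound $0\leqslant w_{n}-v_{n}\leqslant(1-t^{2\theta^{n}})w_{0}$ into norm convergence. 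Two cosmetic points you may wish to smooth over: when $\theta=0$ the coefficient $t^{2\theta^{n}}$ equals $1$ for $n\geqslant1$, so the inductive step cannot literally invoke $\theta$-concavity (which requires $0<k<1$); but then $v_{n}=w_{n}$ and the claim holds trivially. Also, the Cauchy property of $\{v_{n}\}$ needs the one extra line $0\leqslant v_{m}-v_{n}\leqslant w_{n}-v_{n}$ for $m\geqslant n$ before normality is applied, which your sketch gestures at but does not state.
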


\begin{theorem}\label{t4.1} Assume that {\rm (H1), (H5), (H6)} hold. Then
the fractional differential equation boundary-value problem (1)
and (2) has a unique positive solution for any $\lambda>0$.
\end{theorem}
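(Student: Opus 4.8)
The plan is to recast the boundary-value problem as a fixed-point equation $T_\lambda u = u$ on a suitable solid cone and then apply Lemma~\ref{l4.1} (the Guo--Krasnoselskii theorem for $\theta$-concave increasing operators). First I would take $X = C[0,1]$ with the usual sup-norm and $P = \{u \in C[0,1] : u(t) \geqslant 0 \text{ on } [0,1]\}$, the standard cone of nonnegative functions, which is normal and solid with interior $P^o = \{u : u(t) > 0 \text{ on } [0,1]\}$. The operator $T_\lambda$ is the integral operator defined in \eqref{e3.1}. The key structural point, established in Lemma~4, is that fixed points of $T_\lambda$ correspond exactly to solutions of (1)--(2), so uniqueness of the fixed point yields uniqueness of the positive solution.

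The main work is verifying the three hypotheses of Lemma~\ref{l4.1} for $T_\lambda$. \emph{Positivity into the interior:} for $u \in P^o$ one must show $T_\lambda u \in P^o$, i.e.\ $T_\lambda u(t) > 0$ on $[0,1]$. Since $\lambda, \mu > 0$ and $0 \leqslant \gamma < 1$, the constant term $\tfrac{\lambda + \gamma \mu h}{1-\gamma}$ is strictly positive, so $T_\lambda u(t) \geqslant \tfrac{\lambda + \gamma \mu h}{1-\gamma} > 0$ regardless of the integral terms, which are nonnegative by Lemma~\ref{l2.5}(1); this gives $T_\lambda : P^o \to P^o$ immediately. \emph{Monotonicity:} if $u_1 \leqslant u_2$ pointwise, then (H5) gives $f(u_1(\tau)) \leqslant f(u_2(\tau))$, and since $a \geqslant 0$, $H \geqslant 0$, and $\phi_q$ is increasing, each integrand in \eqref{e3.1} is monotone; hence $T_\lambda u_1 \leqslant T_\lambda u_2$, so $T_\lambda$ is increasing.

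\emph{$\theta$-concavity:} this is the step I expect to be the main obstacle, because the $p$-Laplacian forces the scaling to pass through $\phi_q$ and interact with (H6). Fix $0 < k < 1$ and $u \in P^o$; I would estimate $T_\lambda(ku)(t)$ from below. Inside each integral, (H6) gives $f(k\,u(\tau)) \geqslant (\phi_p(k))^{\theta} f(u(\tau))$, and pulling the constant $(\phi_p(k))^\theta$ out through $\phi_q$ (using that $\phi_q$ is a power map with $\phi_q(\phi_p(k)) = k$ and $\phi_q$ respecting scalar powers) produces a factor $k^{\theta}$ multiplying each integral term. The delicate point is the additive constant $\tfrac{\lambda + \gamma \mu h}{1-\gamma}$: since $0 < k < 1$ and $0 \leqslant \theta < 1$ we have $k^\theta \geqslant k$, so one needs $\tfrac{\lambda+\gamma\mu h}{1-\gamma} \geqslant k^\theta \cdot \tfrac{\lambda+\gamma\mu h}{1-\gamma}$, which holds because $k^\theta \leqslant 1$; combining the two bounds yields $T_\lambda(ku) \geqslant k^\theta T_\lambda u$, the required $\theta$-concavity inequality. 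With positivity, monotonicity, and $\theta$-concavity in hand, Lemma~\ref{l4.1} applies directly and delivers a unique fixed point of $T_\lambda$ in $P^o$, hence a unique positive solution of (1)--(2) for every $\lambda > 0$; hypothesis (H1) is used throughout to guarantee the integrals in \eqref{e3.1} are finite and the operator is well defined and compact.
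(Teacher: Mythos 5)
Your proposal is correct and follows essentially the same route as the paper: the same cone $P$ of nonnegative functions with interior $P^o$, the same operator $T_\lambda$ from \eqref{e3.1}, verification of monotonicity via (H5), $\theta$-concavity via (H6) together with $k^\theta\leqslant 1$ absorbing the additive constant $\tfrac{\lambda+\gamma\mu h}{1-\gamma}$, and then Lemma~\ref{l4.1}. In fact your explicit justification that the factor $(\phi_p(k))^\theta$ passes through $\phi_q$ as $k^\theta$ (using $(p-1)(q-1)=1$) is a detail the paper leaves implicit, so nothing is missing.
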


\begin{proof}
Define $P=\{u\in C[0,1]:u(t)\geqslant 0\text{on }[0,1]\}$. Then $P$ is a
normal solid cone in $C[0,1]$ with 
\begin{equation*}
P^{o}=\{u\in C[0,1]:u(t)>0\ \text{ on }[0,1]\}.
\end{equation*}%
For any fixed $\lambda >0$, let $T_{\lambda }:P\rightarrow C[0,1]$ be
defined by \eqref{e3.1}. Define $T:P\rightarrow C[0,1]$ by 
\begin{align*}
Tu(t)& =\int_{0}^{t}\frac{(t-s)^{\alpha -2}}{\Gamma (\alpha )}\phi _{q}\Big(%
\int_{0}^{1}H(s,\tau )a(\tau )f(u(\tau ))d\tau \Big)ds \\
& \quad +\frac{\gamma }{1-\gamma }\int_{0}^{h}\frac{(h-s)^{\alpha -2}}{%
\Gamma (\alpha )}\phi _{q}\Big(\int_{0}^{1}H(s,\tau )a(\tau )f(u(\tau
))d\tau \Big)ds
\end{align*}%
Then from (H5), we have $T$ is increasing in $u\in P^{o}$ and 
\begin{equation*}
T_{\lambda }u(t)=Tu(t)+\frac{\lambda +\gamma \mu h}{1-\gamma }.
\end{equation*}%
Clearly, $T_{\lambda }:P^{o}\rightarrow P^{o}$. Next, we prove that $%
T_{\lambda }$ is a $\theta $-concave increasing operator. In fact, for $%
u_{1},u_{2}\in P$ with $u_{1}(t)\geqslant u_{2}(t)$ on $[0,1]$, we obtain 
\begin{equation*}
T_{\lambda }u_{1}(t)\geqslant Tu_{2}(t)+\frac{\lambda +\gamma \mu h}{%
1-\gamma }=T_{\lambda }u_{2}(t);
\end{equation*}%
i.e., $T_{\lambda }$ is increasing. Moreover, (H6) implies 
\begin{align*}
T_{\lambda }(ku)(t)& \geqslant k^{\theta }\int_{0}^{t}\frac{(t-s)^{\alpha -2}%
}{\Gamma (\alpha )}\phi _{q}\Big(\int_{0}^{1}H(s,\tau )a(\tau )f(u(\tau
))d\tau \Big)ds \\
& \quad +k^{\theta }\frac{\gamma }{1-\gamma }\int_{0}^{h}\frac{(h-s)^{\alpha
-2}}{\Gamma (\alpha )}\phi _{q}\Big(\int_{0}^{1}H(s,\tau )a(\tau )f(u(\tau
))d\tau \Big)ds+\frac{\lambda +\gamma \mu h}{1-\gamma } \\
& =k^{\theta }Tu(t)+\frac{\lambda +\gamma \mu h}{1-\gamma } \\
& \geqslant k^{\theta }(Tu(t)+\frac{\lambda +\gamma \mu h}{1-\gamma }%
)=k^{\theta }T_{\lambda }u(t);
\end{align*}%
i.e., $T_{\lambda }$ is $\theta $-concave. By Lemma \ref{l4.1}, $T_{\lambda
} $ has a unique fixed point $u_{\lambda }$ in $P^{o}$, that is, the
fractional differential equation boundary-value problem (1) and (2) has a
unique positive solution. The proof is complete.
\end{proof}

\section{Nonexistence}

In this section, we let the Banach space $C[0,1]$ be endowed with the norm $%
\|u\|=\max_{0\leqslant t\leqslant1}|u(t)|$.

\begin{lemma}\label{l5.1} 
Assume {\rm (H1)} holds and let $0<\delta<1$ be given in {\rm (H4)}. 
Then the unique solution $u(t)$ of
fractional differential equation boundary-value problem (5)
and (6) satisfies
$$
u(t)\geqslant c_\delta\|u\|\quad \text{for }  \delta\leqslant t\leqslant1,
$$
where $c_\delta$ is defined by \eqref{e2.13}.
\end{lemma}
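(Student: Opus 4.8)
The plan is to estimate $u$ directly from the representation supplied by Lemma 4, in which the forcing term is the given $y$:
\begin{equation*}
u(t)=\int_{0}^{t}\frac{(t-s)^{\alpha-1}}{\Gamma(\alpha)}w(s)\,ds+\frac{\gamma}{1-\gamma}\int_{0}^{h}\frac{(h-s)^{\alpha-1}}{\Gamma(\alpha)}w(s)\,ds+\frac{\lambda+\gamma\mu h}{1-\gamma},
\end{equation*}
where $w(s):=\phi_{q}\big(\int_{0}^{1}H(s,\tau)y(\tau)\,d\tau\big)$. First I would note that $y\geqslant0$ and, by Lemma \ref{l2.5}(1), $H\geqslant0$, so $w\geqslant0$ because $\phi_q$ is increasing; together with $\lambda,\mu>0$ and $0\leqslant\gamma<1$ this shows each of the three summands is nonnegative. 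Abbreviating the two summands that are constant in $t$ by $B$ and $C$, we have $u=\Phi+B+C$ with $\Phi(t)=\frac{1}{\Gamma(\alpha)}\int_0^t(t-s)^{\alpha-1}w(s)\,ds$. Since $\Phi$ is nondecreasing (its derivative is the order-$(\alpha-1)$ fractional integral of the nonnegative $w$, hence nonnegative), the maximum of $u$ is attained at the right endpoint, so $\|u\|=u(1)=\Phi(1)+B+C$.

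The heart of the argument is a two-sided control of $\Phi$; write $\rho:=\phi_q\big(\int_0^1 H(1,\tau)y(\tau)\,d\tau\big)$, which is finite by {\rm (H1)}. For the upper estimate I would use $H(s,\tau)\leqslant H(1,\tau)$ from Lemma \ref{l2.5}(1) together with monotonicity of $\phi_q$ to replace $w(s)$ by the constant $\rho$, and then integrate the kernel, obtaining $\Phi(1)\leqslant\rho/\Gamma(\alpha+1)$. For the lower estimate, valid on $\delta\leqslant t\leqslant1$, I would invoke the sharper bound $H(s,\tau)\geqslant s^{\beta-1}H(1,\tau)$ from Lemma \ref{l2.5}(2); since $\phi_q$ is multiplicative on the positive reals this yields $w(s)\geqslant\phi_q(s^{\beta-1})\rho$. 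Restricting the integral to $[0,\delta]$ and estimating the fractional kernel from below so that precisely the constant $c_\delta$ of \eqref{e2.13} is produced, one gets $\Phi(t)\geqslant c_\delta\,\rho/\Gamma(\alpha+1)$. Combining the two one-sided bounds (no division is needed),
\begin{equation*}
\Phi(t)\geqslant c_\delta\cdot\frac{\rho}{\Gamma(\alpha+1)}\geqslant c_\delta\,\Phi(1),\qquad \delta\leqslant t\leqslant1.
\end{equation*}

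It then remains only to restore the boundary terms. Because $c_\delta\in(0,1)$ by \eqref{e2.13} and $B,C\geqslant0$,
\begin{equation*}
u(t)-c_\delta\|u\|=\big(\Phi(t)-c_\delta\Phi(1)\big)+(1-c_\delta)(B+C)\geqslant0,\qquad \delta\leqslant t\leqslant1,
\end{equation*}
which is exactly the assertion. I expect the lower estimate on $\Phi$ to be the main obstacle: it is the one place where all three special features must be used simultaneously — the non-trivial cone inequality $H(s,\tau)\geqslant s^{\beta-1}H(1,\tau)$, the multiplicativity of $\phi_q$ that peels off the factor $\phi_q(s^{\beta-1})$, and a careful restriction of the fractional kernel to $[0,\delta]$ so that its normalisation matches the constant $c_\delta$ of \eqref{e2.13} against the upper bound for $\Phi(1)$. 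Once the clean inequality $\Phi(t)\geqslant c_\delta\Phi(1)$ is secured, absorbing the nonnegative constants $B$ and $C$ by means of $c_\delta<1$ is routine.
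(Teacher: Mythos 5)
Your overall route is the same as the paper's: an upper bound via $H(s,\tau)\leqslant H(1,\tau)$, a lower bound on $[\delta,1]$ via $H(s,\tau)\geqslant s^{\beta-1}H(1,\tau)$ together with the multiplicativity of $\phi_q$, and absorption of the two nonnegative constant terms using $c_\delta<1$; your additional observation that $\Phi$ is nondecreasing, hence $\|u\|=u(1)$, is correct but not essential (the paper simply bounds $u(t)$ for all $t$). The problem is the step you yourself flag as the main obstacle: you never actually prove that the kernel estimate produces \emph{the} constant $c_\delta$ of \eqref{e2.13}, and with the kernel $(t-s)^{\alpha-1}$ you adopted, it cannot be proved --- it is false. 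After factoring out $\rho=\phi_q\big(\int_0^1H(1,\tau)y(\tau)\,d\tau\big)$, your asserted bound $\Phi(t)\geqslant c_\delta\rho/\Gamma(\alpha+1)$ amounts to
\begin{equation*}
\frac{1}{\Gamma(\alpha)}\int_0^\delta (t-s)^{\alpha-1}\phi_q(s^{\beta-1})\,ds\;\geqslant\;\frac{c_\delta}{\Gamma(\alpha+1)}=\frac{1}{\Gamma(\alpha)}\int_0^\delta (1-s)^{\alpha-2}\phi_q(s^{\beta-1})\,ds,\qquad \delta\leqslant t\leqslant 1.
\end{equation*}
Since $1<\alpha\leqslant2$, we have $(t-s)^{\alpha-1}\leqslant1$ (base in $[0,1]$, positive exponent), whereas $(1-s)^{\alpha-2}\geqslant1$ (base in $(0,1]$, nonpositive exponent). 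So the integrands compare pointwise in the \emph{opposite} direction, and the displayed inequality fails --- strictly so, for instance at $t=\delta$.

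The mismatch has a concrete source: $c_\delta$ in \eqref{e2.13} is calibrated to the kernel $(t-s)^{\alpha-2}$, which is the kernel the paper actually uses in the operator \eqref{e3.1} and in its proofs of Lemmas 3 and 4 and of the present lemma, while you took $(t-s)^{\alpha-1}$ from the displayed statement of Lemma 4 (the paper is internally inconsistent about this exponent). For the exponent $\alpha-2\leqslant0$ the map $t\mapsto(t-s)^{\alpha-2}$ is nonincreasing, so $(t-s)^{\alpha-2}\geqslant(1-s)^{\alpha-2}$ whenever $s<t\leqslant1$, and restricting the integration to $[0,\delta]$ gives exactly $\int_0^t\alpha(t-s)^{\alpha-2}\phi_q(s^{\beta-1})\,ds\geqslant c_\delta$ for $t\geqslant\delta$; that is how the paper's proof runs. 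With your kernel, the best constant this scheme can deliver is $\int_0^\delta\alpha(\delta-s)^{\alpha-1}\phi_q(s^{\beta-1})\,ds$, which is strictly smaller than $c_\delta$ and in particular is not the constant that appears in \eqref{e2.12} and in the nonexistence theorem. So your proof has a genuine gap at its central step; to close it in the form stated you would have to work with the $(t-s)^{\alpha-2}$ kernel of \eqref{e3.1}, or else prove (and then propagate through Section 5) the lemma with a different constant.
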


\begin{proof}
In view of Lemma 5 and Eq. (6), we have 
\begin{align*}
u(t)& \leqslant \int_{0}^{t}\frac{(t-s)^{\alpha -2}}{\Gamma (\alpha )}\phi
_{q}\Big(\int_{0}^{1}H(1,\tau )y(\tau )d\tau \Big)ds \\
& \quad +\frac{\gamma }{1-\gamma }\int_{0}^{h}\frac{(h-s)^{\alpha -2}}{%
\Gamma (\alpha )}\phi _{q}\Big(\int_{0}^{1}H(s,\tau )y(\tau )d\tau \big)ds+%
\frac{\lambda +\gamma \mu h}{1-\gamma } \\
& \leqslant \frac{1}{\Gamma (\alpha +1)}\phi _{q}\Big(\int_{0}^{1}H(1,\tau
)y(\tau )d\tau \Big) \\
& \quad +\frac{\gamma }{1-\gamma }\int_{0}^{h}\frac{(h-s)^{\alpha -2}}{%
\Gamma (\alpha )}\phi _{q}\Big(\int_{0}^{1}H(s,\tau )y(\tau )d\tau \Big)ds+%
\frac{\lambda +\gamma \mu h}{1-\gamma }
\end{align*}%
for $t\in \lbrack 0,1]$, and 
\begin{align*}
u(t)& \geqslant \int_{0}^{t}\frac{(t-s)^{\alpha -2}}{\Gamma (\alpha )}\phi
_{q}\Big(\int_{0}^{1}s^{\beta -1}H(1,\tau )y(\tau )d\tau \Big)ds \\
& \quad +\frac{\gamma }{1-\gamma }\int_{0}^{h}\frac{(h-s)^{\alpha -2}}{%
\Gamma (\alpha )}\phi _{q}\Big(\int_{0}^{1}H(s,\tau )y(\tau )d\tau \Big)ds+%
\frac{\lambda +\gamma \mu h}{1-\gamma } \\
& =\int_{0}^{t}\alpha (t-s)^{\alpha -2}\phi _{q}(s^{\beta -1})ds\frac{1}{%
\Gamma (\alpha +1)}\phi _{q}\Big(\int_{0}^{1}H(1,\tau )y(\tau )d\tau \Big) \\
& \quad +\frac{\gamma }{1-\gamma }\int_{0}^{h}\frac{(h-s)^{\alpha -2}}{%
\Gamma (\alpha )}\phi _{q}\Big(\int_{0}^{1}H(s,\tau )y(\tau )d\tau \Big)ds+%
\frac{\lambda +\gamma \mu h}{1-\gamma } \\
& \geqslant c_{\delta }\frac{1}{\Gamma (\alpha +1)}\phi _{q}\Big(%
\int_{0}^{1}H(1,\tau )y(\tau )d\tau \Big) \\
& \quad +\frac{\gamma }{1-\gamma }\int_{0}^{h}\frac{(h-s)^{\alpha -2}}{%
\Gamma (\alpha )}\phi _{q}\Big(\int_{0}^{1}H(s,\tau )y(\tau )d\tau \Big)ds+%
\frac{\lambda +\gamma \mu h}{1-\gamma } \\
& \geqslant c_{\delta }\big[\frac{1}{\Gamma (\alpha +1)}\phi _{q}\Big(%
\int_{0}^{1}H(1,\tau )y(\tau )d\tau \Big) \\
& \quad +\frac{\gamma }{1-\gamma }\int_{0}^{h}\frac{(h-s)^{\alpha -2}}{%
\Gamma (\alpha )}\phi _{q}\Big(\int_{0}^{1}H(s,\tau )y(\tau )d\tau \Big)ds+%
\frac{\lambda +\gamma \mu h}{1-\gamma }\big]
\end{align*}%
for $t\in \lbrack \delta ,1]$. Therefore, $u(t)\geqslant c_{\delta }\Vert
u\Vert $ for $\delta \leqslant t\leqslant 1$. The proof is complete.
\end{proof}

\begin{theorem}
Assume that (H1), (H4) hold. Then the fractional differential equation
boundary-value problem (1) and (2) has no positive solution for $\lambda
+\gamma \mu h>\left( 1-\gamma \right) e.$
\end{theorem}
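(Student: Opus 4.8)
The plan is to argue by contradiction. Suppose that, for some pair $(\lambda,\mu)$ with $\lambda+\gamma\mu h>(1-\gamma)e$, the boundary-value problem (1)--(2) admits a positive solution $u$. By Lemma 5, $u$ is given by the integral representation with $y(\tau)=a(\tau)f(u(\tau))$, in which every term is nonnegative except the constant summand $\frac{\lambda+\gamma\mu h}{1-\gamma}$; by hypothesis this summand exceeds $e$. Hence the first and crucial step is simply to observe that
$$
u(t)\geqslant \frac{\lambda+\gamma\mu h}{1-\gamma}>e\qquad\text{for every } t\in[0,1].
$$
This places the \emph{entire} range of $u$ inside the region $(e,+\infty)$ on which the lower growth bound (H4) is available.

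Next I would invoke (H4) to get $f(u(\tau))\geqslant N\phi_p(u(\tau))$ for all $\tau\in[0,1]$, and combine it with the interior estimate of Lemma \ref{l5.1}, namely $u(\tau)\geqslant c_\delta\|u\|$ on $[\delta,1]$, together with the monotonicity of $\phi_p$, so that $f(u(\tau))\geqslant N\phi_p(c_\delta\|u\|)$ for $\tau\in[\delta,1]$. Inserting this into the lower bound for $u$ coming from the solution formula---discarding the nonnegative $\gamma$-term and the constant term, restricting the inner $\tau$-integration to $[\delta,1]$, and using $H(s,\tau)\geqslant s^{\beta-1}H(1,\tau)$ from Lemma \ref{l2.5}(2)---yields
$$
u(t)\geqslant \int_0^t\frac{(t-s)^{\alpha-2}}{\Gamma(\alpha)}\phi_q\Big(s^{\beta-1}N\phi_p(c_\delta\|u\|)\int_\delta^1 H(1,\tau)a(\tau)\,d\tau\Big)ds.
$$
Here I would exploit the multiplicativity of $\phi_q$ on positive arguments, $\phi_q(xy)=\phi_q(x)\phi_q(y)$, together with $\phi_q(\phi_p(x))=x$, to factor $s^{\beta-1}$ and the constant $N\phi_p(c_\delta\|u\|)$ out of $\phi_q$.

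Evaluating at $t=1$, using $u(1)\leqslant\|u\|$, and dividing off the positive factor $\|u\|$ then leaves
$$
1\geqslant c_\delta\,\phi_q(N)\,\Big(\int_0^1\frac{(1-s)^{\alpha-2}}{\Gamma(\alpha)}\phi_q(s^{\beta-1})\,ds\Big)\,\phi_q\Big(\int_\delta^1 H(1,\tau)a(\tau)\,d\tau\Big).
$$
On the other hand, applying $\phi_q$ to the defining inequality \eqref{e2.12} for $N$---again using multiplicativity and $\phi_q\circ\phi_p=\mathrm{identity}$---shows that the right-hand side above is strictly greater than $1$, which is the desired contradiction. I expect the main obstacle to be purely bookkeeping: correctly distributing $\phi_q$ across the product of the constant $N\phi_p(c_\delta\|u\|)$, the kernel power $s^{\beta-1}$, and the $\tau$-integral, and verifying that the resulting factor $c_\delta\,\phi_q(N)\big(\int_0^1\frac{(1-s)^{\alpha-2}}{\Gamma(\alpha)}\phi_q(s^{\beta-1})ds\big)\phi_q\big(\int_\delta^1 H(1,\tau)a(\tau)d\tau\big)$ matches \eqref{e2.12} exactly after the transformation, so that the strict inequality defining $N$ is reproduced. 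Once that matching is confirmed, the contradiction is immediate and the nonexistence follows.
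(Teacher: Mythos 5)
Your proposal is correct and takes essentially the same route as the paper's own proof: argue by contradiction, observe that the constant term $\frac{\lambda+\gamma\mu h}{1-\gamma}>e$ forces $u(t)>e$ on $[0,1]$ so that (H4) applies, then combine Lemma \ref{l5.1}, the kernel bound $H(s,\tau)\geqslant s^{\beta-1}H(1,s)$-type estimate of Lemma \ref{l2.5}, and the multiplicativity of $\phi_q$ to evaluate at $t=1$ and contradict the defining inequality \eqref{e2.12} for $N$. The only minor deviation is that you use $u(1)\leqslant\Vert u\Vert$ where the paper asserts $\Vert u\Vert=u(1)$ without justification, so your bookkeeping is, if anything, slightly more careful.
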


\begin{proof}
Assume, to the contrary, the fractional differential equation boundary-value
problem (1) and (2) has a positive solution $u(t)$ for $\lambda +\gamma \mu
h>(1-\gamma )e$. Then by Lemma 4, we have 
\begin{align*}
u(t)& =\int_{0}^{t}\frac{(t-s)^{\alpha -2}}{\Gamma (\alpha )}\phi _{q}\Big(%
\int_{0}^{1}H(s,\tau )a(\tau )f(u(\tau ))d\tau \Big)ds \\
& \quad +\frac{\gamma }{1-\gamma }\int_{0}^{h}\frac{(h-s)^{\alpha -2}}{%
\Gamma (\alpha )}\phi _{q}\Big(\int_{0}^{1}H(s,\tau )a(\tau )f(u(\tau
))d\tau \Big)ds+\frac{\lambda +\gamma \mu h}{1-\gamma }
\end{align*}%
Therefore, $u(t)>e$ on [0,1]. In view of \eqref{e2.11} and \eqref{e2.12}, we
obtain 
\begin{gather*}
f(u(t))\geqslant N\phi _{p}(u(t))\quad \text{on }[0,1], \\
c_{\delta }\phi _{q}(N)\phi _{q}\Big(\int_{\delta }^{1}H(1,\tau )a(\tau
)d\tau \Big)\int_{0}^{1}\frac{(1-s)^{\alpha -2}}{\Gamma (\alpha )}\phi
_{q}(s^{\beta -1})ds>1.
\end{gather*}%
Then by Lemmas \ref{l2.5} and \ref{l5.1}, we obtain 
\begin{equation*}
\begin{array}{ll}
\Vert u\Vert =u(1) & >\int_{0}^{1}\frac{(1-s)^{\alpha -2}}{\Gamma (\alpha )}%
\phi _{q}\Big(\int_{0}^{1}H(s,\tau )a(\tau )f(u(\tau ))d\tau \Big)ds \\ 
& \geqslant \int_{0}^{1}\frac{(1-s)^{\alpha -2}}{\Gamma (\alpha )}\phi
_{q}(s^{\beta -1})ds\phi _{q}\Big(\int_{0}^{1}H(1,\tau )a(\tau )f(u(\tau
))d\tau \Big) \\ 
& \geqslant \int_{0}^{1}\frac{(1-s)^{\alpha -2}}{\Gamma (\alpha )}\phi
_{q}(s^{\beta -1})ds\phi _{q}(N)\phi _{q}\Big(\int_{\delta }^{1}H(1,\tau
)a(\tau )\phi _{p}(u(\tau ))d\tau \big) \\ 
& \geqslant \Vert u\Vert c_{\delta }\int_{0}^{1}\frac{(1-s)^{\alpha -2}}{%
\Gamma (\alpha )}\phi _{q}(s^{\beta -1})ds\phi _{q}(N)\phi _{q}\Big(%
\int_{\delta }^{1}H(1,\tau )a(\tau )d\tau \Big) \\ 
& >\Vert u\Vert .%
\end{array}%
\end{equation*}%
This contradiction completes the proof
\end{proof}

\begin{corollary}\label{c5.1} 
Assume that {\rm (H1)} holds and $f_\infty=+\infty$. Then
the fractional differential equation boundary-value problem (1)
and (2) has no positive solution for sufficiently large $\lambda>0$.
\end{corollary}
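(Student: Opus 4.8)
The plan is to derive this directly from the preceding nonexistence theorem (the one requiring (H1) and (H4)) together with Remark \ref{r2.1b}, so almost no new work is needed. First I would invoke Remark \ref{r2.1b}, which asserts that the hypothesis $f_\infty=+\infty$ forces (H4) to hold. Concretely, from
\[
f_\infty=\lim_{x\to+\infty}\frac{f(x)}{\phi_p(x)}=+\infty
\]
one can choose $N$ as large as desired and then fix $e>0$ so large that $f(x)\geqslant N\phi_p(x)$ for all $x>e$; choosing $N$ to exceed the threshold in \eqref{e2.12} for some convenient $0<\delta<1$ (with $c_\delta\in(0,1)$ as in \eqref{e2.13}) secures (H4). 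Thus the standing assumptions (H1) and (H4) of the nonexistence theorem are both met.

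Next I would simply quote that theorem: under (H1) and (H4), the boundary-value problem (1)--(2) admits no positive solution whenever
\[
\lambda+\gamma\mu h>(1-\gamma)e.
\]
The key observation is that $\gamma$, $\mu$, $h$, and $e$ are all fixed once (H4) is invoked, so the quantity $(1-\gamma)e-\gamma\mu h$ is a fixed real number. Consequently, for every
\[
\lambda>(1-\gamma)e-\gamma\mu h
\]
the inequality $\lambda+\gamma\mu h>(1-\gamma)e$ holds, and the nonexistence theorem yields the absence of positive solutions. Since this threshold is finite, the condition is satisfied for all sufficiently large $\lambda>0$, which is exactly the claim.

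I do not anticipate any genuine obstacle here, as the statement is a routine specialization. The only point worth stating carefully is the passage from the limit condition $f_\infty=+\infty$ to the quantified form of (H4): one must confirm that $N$ can be taken large enough to satisfy \eqref{e2.12} \emph{simultaneously} with a valid choice of $\delta$ making $c_\delta\in(0,1)$, which is immediate because enlarging $N$ only relaxes the inequality $f(x)\geqslant N\phi_p(x)$ on the tail $x>e$ at the cost of enlarging $e$. Everything else is bookkeeping with the fixed parameters.
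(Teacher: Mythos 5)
Your proposal is correct and is essentially the paper's own (implicit) argument: the corollary is exactly the specialization of the preceding nonexistence theorem via Remark \ref{r2.1b}, which records that $f_\infty=+\infty$ yields (H4), and your unwinding of the quantifiers (fix $\delta$ small enough that $c_\delta\in(0,1)$ and $\int_\delta^1 H(1,\tau)a(\tau)\,d\tau>0$, pick $N$ above the bound \eqref{e2.12}, then take $e$ from the limit) is the right way to make that remark precise. Your final reduction, that $\lambda+\gamma\mu h>(1-\gamma)e$ holds once $\lambda>(1-\gamma)e-\gamma\mu h$ with all other parameters fixed, is exactly what ``sufficiently large $\lambda$'' means here.
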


\section{Conclusion: Identities on the special polynomials whereby Caputo
Fractional derivative}

In this final part, we will focus on the new interesting identities \
related special polynomials by means of Caputo fractional derivative.

As well known, the Bernoulli polynomials may be defined to be:%
\begin{equation}
F\left( t,z\right) =\frac{z}{e^{z}-1}e^{tz}=e^{Bz}=\sum_{n=0}^{\infty
}B_{n}\left( t\right) \frac{z^{n}}{n!},  \tag{19}  \label{final eq. 1}
\end{equation}%
which usual convention about replacing $B^{n}$ by $B_{n}$ in, is used. Also,
we note that the Bernoulli polynomials is analytic on the region $D=\left\{
z\in 
\mathbb{C}
\mid \left\vert z\right\vert <2\pi \right\} $ (see \cite{Kim1}).

Let $\frac{d}{dt}$ be familiar normal derivative, says us the following
identity: 
\begin{equation}
\frac{d}{dt}t^{n}=nt^{n-1}.  \tag{20}  \label{final eq. 2}
\end{equation}

Differentiating in both sides of (\ref{final eq. 1}), we have%
\begin{equation}
\frac{d}{dt}B_{n}\left( t\right) =nB_{n-1}\left( t\right) \text{ }\left( 
\text{see \cite{Kim1}}\right) .  \tag{21}  \label{final eq. 3}
\end{equation}

When $t=0$ in (\ref{final eq. 1}), we have $B_{n}\left( 0\right) :=B_{n}$
are called Bernoulli numbers, which can be generated by%
\begin{equation}
F\left( z\right) =\frac{z}{e^{z}-1}=\sum_{n=0}^{\infty }B_{n}\frac{z^{n}}{n!}%
.  \tag{22}  \label{final eq. 4}
\end{equation}

By (\ref{final eq. 1}) and (\ref{final eq. 4}), we have the following
functional equation:%
\begin{equation*}
F\left( t,z\right) =e^{tz}F\left( z\right)
\end{equation*}%
and this equation yields to%
\begin{equation*}
B_{m}\left( t\right) =\sum_{k=0}^{m}\binom{m}{k}t^{m-k}B_{k}=\sum_{k=0}^{m}%
\binom{m}{k}t^{k}B_{m-k}\text{ (see \cite{Kim1})}
\end{equation*}

Let us now take $y\left( t\right) =B_{m}\left( t\right) $ in Definition 2,
leads to%
\begin{eqnarray*}
D_{0+}^{\alpha }B_{m}\left( t\right) &=&\frac{1}{\Gamma \left( n-\alpha
\right) }\int_{0}^{t}\frac{\frac{d^{n}}{dt^{n}}B_{m}\left( t\right) \mid
_{t=s}}{\left( t-s\right) ^{\alpha -n+1}}ds \\
&=&m\left( m-1\right) \cdots \left( m-n+1\right) \sum_{k=0}^{m-n}\binom{m-n}{%
k}B_{m-n-k}\left[ \frac{1}{\Gamma \left( n-\alpha \right) }\int_{0}^{t}\frac{%
s^{k}}{\left( t-s\right) ^{\alpha -n+1}}ds\right] \\
&=&\frac{\Gamma \left( m+1\right) }{\Gamma \left( m-n+1\right) }%
\sum_{k=0}^{m-n}\frac{k!\binom{m-n}{k}B_{m-n-k}}{\Gamma \left( n+k-\alpha
+1\right) }t^{k-\alpha +n}.
\end{eqnarray*}

Therefore, we procure the following theorem.

\begin{theorem}
The following identity holds true:%
\begin{equation*}
D_{0+}^{\alpha }B_{m}\left( t\right) =\frac{\Gamma \left( m+1\right) }{%
\Gamma \left( m-n+1\right) }\sum_{k=0}^{m-n}\frac{k!\binom{m-n}{k}B_{m-n-k}}{%
\Gamma \left( n+k-\alpha +1\right) }t^{k-\alpha +n}.
\end{equation*}
\end{theorem}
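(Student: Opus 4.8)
The plan is to reduce the whole computation to the Caputo derivative of a single monomial, which is nothing but a Beta-function integral. The two ingredients are the explicit representation $B_m(t)=\sum_{k=0}^{m}\binom{m}{k}t^kB_{m-k}$ and the differentiation rule \eqref{final eq. 3}.

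First I would compute the ordinary $n$-th derivative of $B_m$. Iterating $\frac{d}{dt}B_j(t)=jB_{j-1}(t)$ exactly $n$ times yields $\frac{d^n}{dt^n}B_m(t)=m(m-1)\cdots(m-n+1)B_{m-n}(t)=\frac{\Gamma(m+1)}{\Gamma(m-n+1)}B_{m-n}(t)$, valid for $m\geqslant n$ (if $m<n$ the $n$-th derivative vanishes and the stated empty sum is correctly $0$). I would then expand the surviving Bernoulli polynomial in monomials, $B_{m-n}(s)=\sum_{k=0}^{m-n}\binom{m-n}{k}s^kB_{m-n-k}$, so that after substituting into Definition \ref{d2.2} and pulling the constants and the finite sum outside the integral, the problem collapses to evaluating, for each $k$, the single integral $\int_0^t s^k(t-s)^{-(\alpha-n+1)}\,ds$.

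The heart of the argument is this integral. The substitution $s=tu$ converts it into $t^{\,k-\alpha+n}\int_0^1 u^k(1-u)^{(n-\alpha)-1}\,du=t^{\,k-\alpha+n}B(k+1,n-\alpha)$, a Beta function. Using $B(k+1,n-\alpha)=\frac{\Gamma(k+1)\Gamma(n-\alpha)}{\Gamma(n+k-\alpha+1)}=\frac{k!\,\Gamma(n-\alpha)}{\Gamma(n+k-\alpha+1)}$, the spurious factor $\Gamma(n-\alpha)$ cancels against the prefactor $\frac{1}{\Gamma(n-\alpha)}$ coming from the definition of the Caputo derivative. Reassembling the constant $\frac{\Gamma(m+1)}{\Gamma(m-n+1)}$, the binomial coefficients $\binom{m-n}{k}$, and the Bernoulli numbers $B_{m-n-k}$ then reproduces exactly the claimed identity.

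The only genuine subtlety is convergence together with the integer case. Since $n$ is the least integer with $n\geqslant\alpha$, we have $0\leqslant n-\alpha<1$, so the endpoint exponent $(n-\alpha)-1\in(-1,0]$; the Beta integral converges at $u=1$ precisely when $\alpha$ is non-integral, i.e. $n-\alpha>0$, while the lower endpoint $s=0$ causes no trouble because $k\geqslant0$. Thus for truly fractional $\alpha$ the computation is unconditional. When $\alpha=n$ is an integer, the Caputo derivative reduces by Remark \ref{r2.1} to the ordinary derivative $\frac{d^n}{dt^n}B_m(t)$, and one checks that the right-hand side of the asserted formula is exactly the continuous limit as $\alpha\to n$, so the identity persists in this limiting case as well.
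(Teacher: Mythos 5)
Your proposal is correct and follows essentially the same route as the paper: iterate $\frac{d}{dt}B_j=jB_{j-1}$ to get $\frac{d^n}{dt^n}B_m=\frac{\Gamma(m+1)}{\Gamma(m-n+1)}B_{m-n}$, expand $B_{m-n}(s)=\sum_{k=0}^{m-n}\binom{m-n}{k}s^kB_{m-n-k}$, and evaluate the resulting integrals $\frac{1}{\Gamma(n-\alpha)}\int_0^t s^k(t-s)^{n-\alpha-1}\,ds$ via the Beta function. You merely make explicit two points the paper leaves tacit --- the Beta-function computation itself and the treatment of the degenerate cases $m<n$ and integer $\alpha$ --- which is a welcome but not structurally different refinement.
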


In \cite{Kim4}, The Bernoulli polynomials of higher order are defined by%
\begin{equation}
\underset{l-times}{\underbrace{\frac{z}{e^{z}-1}\frac{z}{e^{z}-1}\cdots 
\frac{z}{e^{z}-1}}}e^{tz}=\sum_{m=0}^{\infty }B_{m}^{\left( l\right) }\left(
t\right) \frac{z^{n}}{n!},  \tag{23}  \label{final eq. 5}
\end{equation}%
we note that $B_{m}^{\left( l\right) }\left( t\right) $ is analytic on $D.$
It follows from (\ref{final eq. 5}), we have 
\begin{equation}
\frac{d}{dt}B_{m}^{\left( l\right) }\left( t\right) =mB_{m-1}^{\left(
l\right) }\left( t\right) \text{ and }\frac{d^{n}}{dt^{n}}B_{m}^{\left(
l\right) }\left( t\right) =\frac{\Gamma \left( m+1\right) }{\Gamma \left(
m-n+1\right) }B_{m-n}^{\left( l\right) }\left( t\right) \text{ (see \cite%
{Kim4})}  \tag{24}  \label{final eq. 6}
\end{equation}

Substituting $x=0$ into (\ref{final eq. 5}), $B_{m}^{\left( l\right) }\left(
0\right) :=B_{m}^{\left( l\right) }$ are called Bernoulli polynomials of
higher order.

Owing to (\ref{final eq. 5}) and (\ref{final eq. 6}), we readily see that%
\begin{eqnarray*}
D_{0+}^{\alpha }B_{m}^{\left( l\right) }\left( t\right) &=&\frac{1}{\Gamma
\left( n-\alpha \right) }\int_{0}^{t}\frac{\frac{d^{n}}{dt^{n}}B_{m}^{\left(
l\right) }\left( t\right) \mid _{t=s}}{\left( t-s\right) ^{\alpha -n+1}}ds \\
&=&m\left( m-1\right) \cdots \left( m-n+1\right) \sum_{k=0}^{m-n}\binom{m-n}{%
k}B_{m-n-k}^{\left( l\right) }\left[ \frac{1}{\Gamma \left( n-\alpha \right) 
}\int_{0}^{t}\frac{s^{k}}{\left( t-s\right) ^{\alpha -n+1}}ds\right] \\
&=&\frac{\Gamma \left( m+1\right) }{\Gamma \left( m-n+1\right) }%
\sum_{k=0}^{m-n}\frac{k!\binom{m-n}{k}B_{m-n-k}^{\left( l\right) }}{\Gamma
\left( n+k-\alpha +1\right) }t^{k-\alpha +n} \\
&=&\frac{\Gamma \left( m+1\right) }{\Gamma \left( m-n+1\right) }%
\sum_{k=0}^{m-n}\frac{k!\binom{m-n}{k}}{\Gamma \left( n+k-\alpha +1\right) }%
t^{k-\alpha +n} \\
&&\times \left( \sum_{\underset{s_{l}\geq 0}{s_{1}+s_{2}+\cdots +s_{l}=m-n-k}%
}\binom{m-n-k}{s_{1},s_{2},\cdots ,s_{l}}\left(
\tprod\limits_{j=1}^{l}B_{s_{j}}\right) \right) .
\end{eqnarray*}

Therefore, we can state the following theorem.

\begin{theorem}
The following identity holds true:%
\begin{equation*}
D_{0+}^{\alpha }B_{m}^{\left( l\right) }\left( t\right) =\frac{\Gamma \left(
m+1\right) }{\Gamma \left( m-n+1\right) }\sum_{k=0}^{m-n}\frac{k!\binom{m-n}{%
k}}{\Gamma \left( n+k-\alpha +1\right) }t^{k-\alpha +n}\left( \sum_{\underset%
{s_{l}\geq 0}{s_{1}+s_{2}+\cdots +s_{l}=m-n-k}}\binom{m-n-k}{%
s_{1},s_{2},\cdots ,s_{l}}\left( \tprod\limits_{j=1}^{l}B_{s_{j}}\right)
\right) 
\end{equation*}%
in which $B_{s_{j}}$ and $\binom{m-n-k}{s_{1},s_{2},\cdots ,s_{l}}$are
Bernoulli numbers and multi-binomial coefficients.
\end{theorem}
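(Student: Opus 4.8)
The plan is to feed $y(t)=B_m^{(l)}(t)$ directly into the Caputo derivative of Definition \ref{d2.2}, writing
$$
D_{0+}^{\alpha}B_m^{(l)}(t)=\frac{1}{\Gamma(n-\alpha)}\int_0^t\frac{\frac{d^n}{ds^n}B_m^{(l)}(s)}{(t-s)^{\alpha-n+1}}\,ds,
$$
and then reduce the whole expression to a finite combination of elementary power integrals. First I would invoke the higher-order differentiation rule $(24)$, namely $\frac{d^n}{dt^n}B_m^{(l)}(t)=\frac{\Gamma(m+1)}{\Gamma(m-n+1)}B_{m-n}^{(l)}(t)$, so that the numerator of the integrand becomes a single higher-order Bernoulli polynomial of degree $m-n$. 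This immediately pulls the constant $\frac{\Gamma(m+1)}{\Gamma(m-n+1)}$ outside the integral and leaves only $B_{m-n}^{(l)}(s)$ to be integrated against the singular kernel $(t-s)^{n-\alpha-1}$.

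Second, I would expand $B_{m-n}^{(l)}(s)$ in powers of $s$. From the generating function $(23)$, comparison of the coefficient of $z^N/N!$ in $\big(\frac{z}{e^z-1}\big)^l e^{sz}$ gives the addition formula $B_N^{(l)}(s)=\sum_{k=0}^{N}\binom{N}{k}B_{N-k}^{(l)}s^k$, where $B_{N-k}^{(l)}$ are the higher-order Bernoulli numbers obtained by setting $s=0$. Taking $N=m-n$ turns the integral into a finite linear combination, with coefficients $\binom{m-n}{k}B_{m-n-k}^{(l)}$, of the monomial integrals $\frac{1}{\Gamma(n-\alpha)}\int_0^t s^k(t-s)^{n-\alpha-1}\,ds$; since the sum is finite, the interchange of summation and integration needs no separate justification.

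Third, each monomial integral is a beta integral. The substitution $s=tu$ gives $\int_0^t s^k(t-s)^{n-\alpha-1}\,ds=t^{k+n-\alpha}\int_0^1 u^k(1-u)^{n-\alpha-1}\,du=t^{k+n-\alpha}\,\frac{\Gamma(k+1)\,\Gamma(n-\alpha)}{\Gamma(n+k-\alpha+1)}$, whence
$$
\frac{1}{\Gamma(n-\alpha)}\int_0^t\frac{s^k}{(t-s)^{\alpha-n+1}}\,ds=\frac{k!}{\Gamma(n+k-\alpha+1)}\,t^{k-\alpha+n}.
$$
Here one uses $n-\alpha>0$ (with $n$ the smallest integer $\ge\alpha$ as in Definition \ref{d2.2}) so that the kernel is integrable. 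Collecting the constant factor and these evaluations already reproduces the intermediate identity $D_{0+}^{\alpha}B_m^{(l)}(t)=\frac{\Gamma(m+1)}{\Gamma(m-n+1)}\sum_{k=0}^{m-n}\frac{k!\binom{m-n}{k}B_{m-n-k}^{(l)}}{\Gamma(n+k-\alpha+1)}t^{k-\alpha+n}$.

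The final and most delicate step is to unfold the higher-order Bernoulli number $B_{m-n-k}^{(l)}$ into the multinomial sum appearing in the statement. Writing $\big(\frac{z}{e^z-1}\big)^l=\prod_{j=1}^l\big(\sum_{s_j\ge0}B_{s_j}\frac{z^{s_j}}{s_j!}\big)$ and extracting the coefficient of $\frac{z^{m-n-k}}{(m-n-k)!}$ by the multinomial theorem yields the convolution identity $B_{m-n-k}^{(l)}=\sum_{s_1+\cdots+s_l=m-n-k}\binom{m-n-k}{s_1,\dots,s_l}\prod_{j=1}^{l}B_{s_j}$ with $s_j\ge0$. Substituting this into the intermediate identity produces exactly the claimed formula. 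I expect the bookkeeping of this last expansion — correctly matching factorials and multi-indices so that the Cauchy product of the $l$ exponential generating functions delivers the stated multi-binomial coefficients — to be the only genuinely error-prone part; everything else is the substitution, the derivative rule $(24)$, and a standard beta integral.
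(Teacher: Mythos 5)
Your proposal is correct and follows essentially the same route as the paper's own computation: substitute $B_m^{(l)}$ into Definition \ref{d2.2}, apply the differentiation rule (24), expand $B_{m-n}^{(l)}(s)$ in powers of $s$, evaluate each monomial term as a beta integral, and finally unfold $B_{m-n-k}^{(l)}$ into the multinomial convolution of Bernoulli numbers. The only difference is that you make explicit the steps (the addition formula, the substitution $s=tu$ in the beta integral, and the Cauchy-product justification of the multinomial identity) that the paper leaves implicit.
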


In the region $T=\left\{ z\in 
\mathbb{C}
\mid \left\vert z\right\vert <\pi \right\} ,$ the Euler polynomials and the
Euler polynomials of higher order are given, respectively, with the help of
the following generating functions:%
\begin{eqnarray}
\frac{2}{e^{z}+1}e^{tz} &=&\sum_{m=0}^{\infty }E_{m}\left( t\right) \frac{%
z^{m}}{m!},  \TCItag{25}  \label{final eq. 7} \\
\underset{l-times}{\underbrace{\frac{2}{e^{z}+1}\frac{2}{e^{z}+1}\cdots 
\frac{2}{e^{z}+1}}}e^{tz} &=&\sum_{m=0}^{\infty }\left(
\sum_{s_{1}+s_{2}+\cdots +s_{l}=m}\binom{m}{s_{1},s_{2},\cdots ,s_{l}}\left(
\tprod\limits_{j=1}^{l-1}E_{s_{j}}\right) t^{s_{l}}\right) \frac{z^{m}}{m!} 
\notag \\
&=&\sum_{m=0}^{\infty }E_{m}^{\left( l\right) }\left( t\right) \frac{z^{n}}{%
n!},  \notag
\end{eqnarray}%
which $E_{s_{j}}$ are Euler numbers in (see \cite{Kim1}, \cite{Kim2}, \cite%
{Kim3} and \cite{Kim4}). From the last equation, we discover the followings:%
\begin{equation}
\frac{d}{dt}E_{m}\left( t\right) =mE_{m-1}\left( t\right) \text{ and }\frac{d%
}{dt}E_{m}^{\left( l\right) }\left( t\right) =mE_{m-1}^{\left( l\right)
}\left( t\right) \text{ (see \cite{Kim2}).}  \tag{26}  \label{final eq. 8}
\end{equation}

Obviously that%
\begin{equation*}
E_{m}^{\left( 1\right) }\left( t\right) :=E_{m}\left( t\right) .
\end{equation*}

Taking $y\left( t\right) =E_{m}^{\left( l\right) }\left( t\right) $ in
Definition 2, by (\ref{final eq. 7}) and (\ref{final eq. 8}), we compute%
\begin{eqnarray*}
D_{0+}^{\alpha }E_{m}^{\left( l\right) }\left( t\right) &=&\frac{1}{\Gamma
\left( n-\alpha \right) }\int_{0}^{t}\frac{\frac{d^{n}}{dt^{n}}E_{m}^{\left(
l\right) }\left( t\right) \mid _{t=s}}{\left( t-s\right) ^{\alpha -n+1}}ds \\
&=&m\left( m-1\right) \cdots \left( m-n+1\right) \sum_{k=0}^{m-n}\binom{m-n}{%
k}E_{m-n-k}^{\left( l\right) }\left[ \frac{1}{\Gamma \left( n-\alpha \right) 
}\int_{0}^{t}\frac{s^{k}}{\left( t-s\right) ^{\alpha -n+1}}ds\right] \\
&=&\frac{\Gamma \left( m+1\right) }{\Gamma \left( m-n+1\right) }%
\sum_{k=0}^{m-n}\frac{k!\binom{m-n}{k}E_{m-n-k}^{\left( l\right) }}{\Gamma
\left( n+k-\alpha +1\right) }t^{k-\alpha +n} \\
&=&\frac{\Gamma \left( m+1\right) }{\Gamma \left( m-n+1\right) }%
\sum_{k=0}^{m-n}\frac{k!\binom{m-n}{k}}{\Gamma \left( n+k-\alpha +1\right) }%
t^{k-\alpha +n} \\
&&\times \left( \sum_{\underset{s_{l}\geq 0}{s_{1}+s_{2}+\cdots +s_{l}=m-n-k}%
}\binom{m-n-k}{s_{1},s_{2},\cdots ,s_{l}}\left(
\tprod\limits_{j=1}^{l}E_{s_{j}}\right) \right) .
\end{eqnarray*}

Therefore, we obtain the following theorem.

\begin{theorem}
The following identity%
\begin{equation*}
D_{0+}^{\alpha }E_{m}^{\left( l\right) }\left( t\right) =\frac{\Gamma \left(
m+1\right) }{\Gamma \left( m-n+1\right) }\sum_{k=0}^{m-n}\frac{k!\binom{m-n}{%
k}}{\Gamma \left( n+k-\alpha +1\right) }\left( \sum_{\underset{s_{l}\geq 0}{%
s_{1}+s_{2}+\cdots +s_{l}=m-n-k}}\binom{m-n-k}{s_{1},s_{2},\cdots ,s_{l}}%
\left( \tprod\limits_{j=1}^{l}E_{s_{j}}\right) \right) t^{k-\alpha +n}.
\end{equation*}%
is true. Obviously that%
\begin{equation*}
D_{0+}^{\alpha }E_{m}\left( t\right) =\frac{\Gamma \left( m+1\right) }{%
\Gamma \left( m-n+1\right) }\sum_{k=0}^{m-n}\frac{k!\binom{m-n}{k}E_{m-n-k}}{%
\Gamma \left( n+k-\alpha +1\right) }t^{k-\alpha +n}.
\end{equation*}
\end{theorem}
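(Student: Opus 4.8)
The plan is to mirror the two Bernoulli computations carried out immediately above, since the higher-order Euler polynomials obey exactly the same structural identities. First I would iterate the recursion \eqref{final eq. 8}, obtaining $\frac{d^n}{dt^n}E_m^{(l)}(t)=m(m-1)\cdots(m-n+1)E_{m-n}^{(l)}(t)=\frac{\Gamma(m+1)}{\Gamma(m-n+1)}E_{m-n}^{(l)}(t)$. Factoring the generating function \eqref{final eq. 7} as $\big(\tfrac{2}{e^z+1}\big)^l e^{tz}=\big(\sum_{s\geq 0}E_s^{(l)}\tfrac{z^s}{s!}\big)e^{tz}$ and matching coefficients of $z^m/m!$ yields the binomial expansion $E_{m-n}^{(l)}(t)=\sum_{k=0}^{m-n}\binom{m-n}{k}E_{m-n-k}^{(l)}t^k$, so that the $n$-th derivative is an explicit polynomial of degree $m-n$ whose coefficients are higher-order Euler numbers.

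Next I would insert $y(t)=E_m^{(l)}(t)$ into Definition~\ref{d2.2} and pull the finite sum through the integral, so that everything reduces to the single monomial integral $\frac{1}{\Gamma(n-\alpha)}\int_0^t s^k(t-s)^{n-\alpha-1}\,ds$. This is the only genuinely analytic step. The change of variable $s=t\sigma$ converts it to a Beta integral, $\frac{t^{k-\alpha+n}}{\Gamma(n-\alpha)}\int_0^1\sigma^k(1-\sigma)^{n-\alpha-1}\,d\sigma=\frac{t^{k-\alpha+n}}{\Gamma(n-\alpha)}B(k+1,n-\alpha)$; invoking $B(k+1,n-\alpha)=\frac{k!\,\Gamma(n-\alpha)}{\Gamma(n+k-\alpha+1)}$ cancels the $\Gamma(n-\alpha)$ and leaves $\frac{k!}{\Gamma(n+k-\alpha+1)}t^{k-\alpha+n}$. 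Reassembling produces the intermediate identity $D_{0+}^\alpha E_m^{(l)}(t)=\frac{\Gamma(m+1)}{\Gamma(m-n+1)}\sum_{k=0}^{m-n}\frac{k!\binom{m-n}{k}E_{m-n-k}^{(l)}}{\Gamma(n+k-\alpha+1)}t^{k-\alpha+n}$ recorded just before the statement.

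Finally I would rewrite each higher-order Euler number via its multinomial form. Reading off the coefficient of $z^{m-n-k}/(m-n-k)!$ in $\big(\tfrac{2}{e^z+1}\big)^l=\big(\sum_{s\geq 0}E_s\tfrac{z^s}{s!}\big)^l$ through the Cauchy product gives $E_{m-n-k}^{(l)}=\sum_{s_1+\cdots+s_l=m-n-k}\binom{m-n-k}{s_1,\dots,s_l}\prod_{j=1}^{l}E_{s_j}$, and substituting this into the intermediate identity yields the asserted formula; taking $l=1$, where $E_m^{(1)}(t)=E_m(t)$ and the inner multinomial sum collapses to the single term $E_{m-n-k}$, recovers the displayed special case. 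The derivation is structurally identical to the Bernoulli ones, so I do not expect any real obstacle; the one point needing care is the convergence of the Beta integral, which requires $n-\alpha>0$ (that is, $\alpha$ non-integral with $n$ the least integer exceeding $\alpha$) in order for both Definition~\ref{d2.2} and the Gamma-function evaluation of $B(k+1,n-\alpha)$ to be valid.
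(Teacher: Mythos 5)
Your proposal is correct and follows essentially the same route as the paper: iterate the derivative recursion \eqref{final eq. 8} to get $\frac{d^n}{dt^n}E_m^{(l)}(t)=\frac{\Gamma(m+1)}{\Gamma(m-n+1)}E_{m-n}^{(l)}(t)$, expand $E_{m-n}^{(l)}(t)$ binomially in higher-order Euler numbers, evaluate the resulting monomial Caputo integrals as Beta integrals giving $\frac{k!}{\Gamma(n+k-\alpha+1)}t^{k-\alpha+n}$, and finally rewrite $E_{m-n-k}^{(l)}$ via the multinomial Cauchy product, specializing to $l=1$ for the second display. Your added caveat that the Beta-integral evaluation needs $n-\alpha>0$ (i.e.\ $\alpha$ non-integral) is a point the paper passes over silently, but it does not alter the argument.
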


In the region $T=\left\{ z\in 
\mathbb{C}
\mid \left\vert z\right\vert <\pi \right\} ,$ Genocchi polynomials, $%
G_{m}\left( x\right) $, and Genocchi polynomials of higher order, $%
G_{m}^{\left( l\right) }\left( x\right) $, are defined as an extension of
Genocchi numbers $G_{m}$ defined in \cite{Kim1}, \cite{Araci1}, \cite{seo}.,
respectively:%
\begin{eqnarray}
\frac{2z}{e^{z}+1}e^{tz} &=&\sum_{m=0}^{\infty }G_{m}\left( t\right) \frac{%
z^{m}}{m!},  \TCItag{27}  \label{final eq. 9} \\
\underset{l-times}{\underbrace{\frac{2z}{e^{z}+1}\frac{2z}{e^{z}+1}\cdots 
\frac{2z}{e^{z}+1}}}e^{tz} &=&\sum_{m=0}^{\infty }G_{m}^{\left( l\right)
}\left( t\right) \frac{z^{n}}{n!}.  \notag
\end{eqnarray}

By the similar method, in this final section, we arrive at the following
theorem.

\begin{theorem}
The following identity%
\begin{equation*}
D_{0+}^{\alpha }G_{m}^{\left( l\right) }\left( t\right) =\frac{\Gamma \left(
m+1\right) }{\Gamma \left( m-n+1\right) }\sum_{k=0}^{m-n}\frac{k!\binom{m-n}{%
k}}{\Gamma \left( n+k-\alpha +1\right) }\left( \sum_{\underset{s_{l}\geq 0}{%
s_{1}+s_{2}+\cdots +s_{l}=m-n-k}}\binom{m-n-k}{s_{1},s_{2},\cdots ,s_{l}}%
\left( \tprod\limits_{j=1}^{l}G_{s_{j}}\right) \right) t^{k-\alpha +n},
\end{equation*}%
is true. Obviously that,%
\begin{equation*}
D_{0+}^{\alpha }G_{m}\left( t\right) =\frac{\Gamma \left( m+1\right) }{%
\Gamma \left( m-n+1\right) }\sum_{k=0}^{m-n}\frac{k!\binom{m-n}{k}G_{m-n-k}}{%
\Gamma \left( n+k-\alpha +1\right) }t^{k-\alpha +n}.
\end{equation*}
\end{theorem}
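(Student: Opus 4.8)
The plan is to imitate, verbatim in structure, the computations already carried out for $B_m^{(l)}(t)$ and $E_m^{(l)}(t)$, since the theorem is asserted to follow \emph{by the similar method}. First I would set $y(t)=G_m^{(l)}(t)$ in Definition \ref{d2.2}, so that
\begin{equation*}
D_{0+}^{\alpha }G_{m}^{\left( l\right) }\left( t\right) =\frac{1}{\Gamma \left( n-\alpha \right) }\int_{0}^{t}\frac{\frac{d^{n}}{dt^{n}}G_{m}^{\left( l\right) }\left( t\right) \mid _{t=s}}{\left( t-s\right) ^{\alpha -n+1}}ds.
\end{equation*}
Two ingredients then feed into this formula: the $n$-th derivative rule for the higher-order Genocchi polynomials, and the explicit polynomial expansion of $G_{m-n}^{(l)}(t)$ in powers of $t$.

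For the derivative rule, I would differentiate the generating function (27) in $t$, which yields $\frac{d}{dt}G_{m}^{(l)}(t)=mG_{m-1}^{(l)}(t)$ exactly as the Bernoulli and Euler analogues in \eqref{final eq. 6} and \eqref{final eq. 8}. Iterating $n$ times gives $\frac{d^{n}}{dt^{n}}G_{m}^{(l)}(t)=\frac{\Gamma(m+1)}{\Gamma(m-n+1)}G_{m-n}^{(l)}(t)$, which is the Genocchi counterpart of the second identity in \eqref{final eq. 6}. For the polynomial expansion, I would factor $e^{tz}$ out of the $l$-fold product in (27); the resulting functional equation shows that $G_{m-n}^{(l)}(t)=\sum_{k=0}^{m-n}\binom{m-n}{k}G_{m-n-k}^{(l)}t^{k}$, and a further application of the multinomial theorem to the $l$-fold product $\big(\frac{2z}{e^{z}+1}\big)^{l}$ expresses the higher-order numbers as $G_{m-n-k}^{(l)}=\sum_{s_{1}+\cdots+s_{l}=m-n-k}\binom{m-n-k}{s_{1},\dots,s_{l}}\prod_{j=1}^{l}G_{s_{j}}$.

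Substituting these into the integral leaves, term by term, the single scalar integral $\frac{1}{\Gamma(n-\alpha)}\int_{0}^{t}\frac{s^{k}}{(t-s)^{\alpha-n+1}}ds$, which is precisely the one evaluated in the Bernoulli and Euler derivations. The substitution $s=t\xi$ turns it into a Beta integral, giving
\begin{equation*}
\frac{1}{\Gamma(n-\alpha)}\int_{0}^{t}\frac{s^{k}}{(t-s)^{\alpha-n+1}}ds=\frac{1}{\Gamma(n-\alpha)}\cdot\frac{\Gamma(k+1)\,\Gamma(n-\alpha)}{\Gamma(n+k-\alpha+1)}\,t^{k-\alpha+n}=\frac{k!}{\Gamma(n+k-\alpha+1)}\,t^{k-\alpha+n}.
\end{equation*}
Collecting the prefactor $\frac{\Gamma(m+1)}{\Gamma(m-n+1)}$, the binomial/multinomial coefficients, and this Gamma-quotient then assembles the claimed identity; the degenerate case $l=1$, where the inner multinomial sum collapses to the single number $G_{m-n-k}$, recovers the stated formula for $D_{0+}^{\alpha}G_{m}(t)$.

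I expect no serious obstacle, since every sum in sight is finite and all interchanges of summation with the Caputo integral are therefore trivially justified. The only point deserving a moment's care is the derivative rule itself: one must confirm that differentiating the higher-order generating function indeed produces $mG_{m-1}^{(l)}(t)$ and hence the clean factor $\frac{\Gamma(m+1)}{\Gamma(m-n+1)}$ after $n$ iterations, paralleling \eqref{final eq. 6}; once that is in hand the remainder is a mechanical transcription of the Bernoulli and Euler arguments with $B$ (resp.\ $E$) replaced by $G$.
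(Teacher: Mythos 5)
Your proposal is correct and coincides with the paper's own treatment: the paper gives no separate argument for the Genocchi case beyond the phrase ``by the similar method,'' and your proof is exactly that method made explicit --- the derivative rule $\frac{d^{n}}{dt^{n}}G_{m}^{(l)}(t)=\frac{\Gamma(m+1)}{\Gamma(m-n+1)}G_{m-n}^{(l)}(t)$ from the generating function, the binomial expansion $G_{m-n}^{(l)}(t)=\sum_{k}\binom{m-n}{k}G_{m-n-k}^{(l)}t^{k}$ together with the multinomial expansion of $G_{m-n-k}^{(l)}$ into products of Genocchi numbers, and the Beta-integral evaluation of $\frac{1}{\Gamma(n-\alpha)}\int_{0}^{t}s^{k}(t-s)^{n-\alpha-1}\,ds$, mirroring the Bernoulli and Euler computations term by term. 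No gaps; the $l=1$ specialization recovering $D_{0+}^{\alpha}G_{m}(t)$ is also handled as in the paper.
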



\begin{thebibliography}{99}
\bibitem{Kim1} T. Kim, \emph{Some identities for the Bernoulli, the Euler
and the Genocchi numbers and polynomials}, Adv. Stud. Contemp. Math. 20
(2010), no. 1, 23-28.

\bibitem{Kim2} D. S. Kim and T. Kim, \emph{Some identities of higher order
Euler polynomials arising from Euler basis}, Integral Transforms and special
Functions, 2012, http://dx.doi.org/10.1080/10652469.2012.754756.

\bibitem{Kim3} T. Kim, \emph{Some identities on the }$q$\emph{-integral
representation of the product of several }$q$\emph{-Bernstein-type
polynomials}, Abstract and Applied Analysis Volume 2011 (2011), Article ID
634675, 11 pages.

\bibitem{Kim4} T. Kim and D. S. Kim, \emph{A note on higher-order Bernoulli
polynomials}, Journal of Inequalities and Appl. (In press).

\bibitem{Araci1} S. Araci, \emph{Novel identities for }$q$\emph{-Genocchi
numbers and polynomials}, Journal of Function Spaces and Applications, Vol
2012 (2012), Article ID 214961, 13 pages.

\bibitem{seo} J. J. Seo, \emph{Certain results on the }$q$\emph{-Genocchi
numbers and polynomials}, Journal of the Chungcheon Math. Soc. Vol. 26, No.
1, 2013.

\bibitem{Agarwal} R. P. Agarwal; \emph{Formulation of Euler-Larange
equations for fractional variational problems}, J. Math. Anal. Appl. 272
(2002) 368--379.

\bibitem{balan2} D. Baleanu, J. J. Trujillo, \emph{On exact solutions of a
class of fractional Euler-Lagrange equations}, Nonlinear Dynamics 52 (4),
(2008) 331-335.

\bibitem{Balan1} T. A. Maraaba, F. Jarad, D. Baleanu, \emph{On the existence
and the uniqueness theorem for fractional differential equations with
bounded delay within Caputo derivatives}, Sci. China Series A: Mathematics,
vol. 51(10), (2008) 1775-1786.

\bibitem{bai} Z. Bai, H. L\"{u}; \emph{Positive solutions for boundary-value
problem of nonlinear fractional differential equation}, J. Math. Anal. Appl.
311 (2005) 495--505.

\bibitem{chai} G. Chai; \emph{Positive solutions for boundary-value problem
of fractional differential equation with $p$-Laplacian operator}, Boundary
Value Problems 2012 (2012) 1--18.

\bibitem{Chen21} T. Chen, W. Liu; \emph{An anti-periodic boundary-value
problem for fractional differential equation with $p$-Laplacian operator},
Appl. Math. Lett. (2012). doi: 10. 1016/j. aml. 2012. 01. 035.

\bibitem{Chen22} T. Chen, W. Liu, Z. Hu; \emph{A boundary-value problem for
fractional differential equation with $p$-Laplacian operator at resonance},
Nonlinear Anal. 75 (2012) 3210--3217.

\bibitem{diethelm} K. Diethelm; \emph{The Analysis of Fractional
Differential Equations}, Springer-Verlag, Berlin, 2010.

\bibitem{Dix} J. G. Dix, G. L. Karakostas; \emph{A fixed-point theorem for
S-type operators on Banach spaces and its applications to boundary-value
problems}, Nonlinear Anal. 71 (2009) 3872--3880.

\bibitem{Feng17} W. Feng, S. sun, Z. Han, Y. Zhao; \emph{Existence of
solutions for a singular system of nonlinear fractional differential
equations}, Comput. Math. appl. 62 (2011) 1370--1378.

\bibitem{guo27} D. Guo, V. Lakshmikantham; \emph{Nonlinear Problems in
Abstract Cones}, Academic Press, Orlando, 1988.

\bibitem{hardy} G. H. Hardy, J. E. Littlewood, G. P\'{o}lya; \emph{%
Inequalities}, Reprint of the 1952 Edition, Cambridge University Press,
Cambridge, 1988.

\bibitem{Kilbas} A. A. Kilbas, H. H. Srivastava, J. J. Trujillo; \emph{%
Theory and Applications of Fractional Differential Equations}, Amsterdam,
Elsevier Science B. V. 2006.

\bibitem{kong} L. Kong, D. Piao, L. Wang; \emph{Positive solutions for third
boundary-value problems with $p$-Laplacian}, Result. Math. 55 (2009)
111--128.

\bibitem{Liang} S. Liang, J. Zhang; \emph{Positive solutions for
boundary-value problems of nonlinear fractional differential equation},
Nonlinear Anal. 71 (2009) 5545--5550.

\bibitem{Liu} X. Liu, M. Jia, X. Xiang; \emph{On the solvability of a
fractional differential equation model involving the $p$-Laplacian operator}%
, Comput. Math. Appl. (2012), doi: 10. 1016/j. camwa. 2012. 03. 001.

\bibitem{Lu} H. Lu, Z. Han; \emph{Existence of positive solutions for
boundary-value problem of fractional differential equation with p-laplacian
operator}, American Jouranl of Engineering and Technology Research, 11
(2011) 3757--3764.

\bibitem{ma23} R. Ma; \emph{Positive solutions for second-order three-point
boundary-value problems}, Appl. Math. Lett. 14 (2001) 1--5.

\bibitem{machado} J. T. Machado, V. Kiryakova, F. Mainardi; \emph{Recent
history of fractional calculus}, Commun. Nonlinear Sci. Numer. Simul. 16
(2011) 1140--1153.

\bibitem{Meral} F. C. Meral, T. J. Royston, R. Magin; \emph{Fractional
calculus in viscoelasticity}, An experimental study, Commun. Nonlinear Sci.
Numer. Simul. 15 (2010) 939--945.

\bibitem{Oldham} K. B. Oldham, J. Spanier; \emph{The Fractional Calculus},
New York, Academic Press, 1974.

\bibitem{Podlubny} I. Podlubny; \emph{Fractional Differential Equations},
New York/London/Toronto, Academic Press, 1999.

\bibitem{wang12} G. Wang, L. Zhang, Sotiris K. Ntouyas; \emph{Existence of
multiple positive solutions of a nonlinear arbitrary order boundary-value
problem with advanced arguments}, Electron. J. Qual. Theory Differ. Equ. 15
(2012) 1--13.

\bibitem{Wang} J. Wang, H. Xiang; \emph{Upper and lower solutions method for
a class of singular fractioal boundary-value problems with p-laplacian
operator}, Abs. Appl. Anal. 2010 (2010), Article ID 971824, 1--12.

\bibitem{Wang14} J. Wang, H. Xiang, Z. Liu; \emph{Existence of concave
positive solutions for boundary-value problem of nonlinear fractional
differential equation with p-laplacian operator}, Int. J. Math. Math. Sci.
2010 (2010) Article ID 495138, 1--17.

\bibitem{Weitzner} H. Weitzner, G. M. Zaslavsky; \emph{Some applications of
fractional equations}, Commun. Nonlinear Sci. Numer. Simul. 8 (2003)
273--281.

\bibitem{xu16} X. Xu, D. Jiang, C. Yuan; \emph{Multiple positive solutions
to singular positone and semipositone Dirichlet-type boundary-value problems
of nonlinear fractional differential equations}, Nonlinear Anal. 74 (2011)
5685--5696.

\bibitem{Xu20} X. Xu, D. Jiang, C. Yuan; \emph{Multiple positive solutions
for the boundary-value problem of a nonlinear fractional differential
equation}, Nonlinear Anal. 71 (2009) 4676--4688.

\bibitem{Xu} J. Xu, Z. Wei, W, Dong; \emph{Uniqueness of positive solutions
for a class of fractional boundary-value problems}, Appl. Math. Lett. 25
(2012) 590--593.

\bibitem{yang19} X. Yang, Z. Wei, W. Dong; \emph{Existence of positive
solutions for the boundary-value problem of nonlinear fractional
differential equations}, Commun. Nonlinear Sci. Numer. Simulat. 17 (2012)
85--92.

\bibitem{Zhao15} Y. Zhao, S. Sun, Z. Han, M. Zhang; \emph{Positive solutions
for boundary-value problems of nonlinear fractional differential equations},
Appl. Math. Comput. 217 (2011) 6950--6958.

\bibitem{Zhao18} Y. Zhao, S. Sun, Z. Han, Q. Li; \emph{The existence of
multiple positive solutions for boundary-value problems of nonlinear
fractional differential equations}, Commun. Nonlinear Sci. Numer. Simulat.
16 (2011) 2086--2097.

\bibitem{Zhao25} Y. Zhao, S. Sun, Z. Han, Q. Li; \emph{Positive solutions to
boundary-value problems of nonlinear fractional differential equations},
Abs. Appl. Anal. 2011 (2011) Article ID 390543, 1--16.

\bibitem{zhou13} Y. Zhou, F. Jiao, J. Li; \emph{Existence and uniqueness for
p-type fractional neutral differential equations}, Nonlinear Anal. 71 (2009)
2724--2733.

\bibitem{Han} Z. Han, H. Lu, S. Sun, D. Yang; \emph{Positive solutions to
boundary-value problems of p-laplacian fractional differential equations
with a parameter in the boundary}, Electron. J. Differ. Eq., 2012 (2012)
1-14.
\end{thebibliography}
\end{document}